\newtheorem{theorem}{Theorem}
\newtheorem{proposition}[theorem]{Proposition}
\newtheorem{lemma}[theorem]{Lemma}
\newtheorem{corollary}[theorem]{Corollary}
\theoremstyle{remark}
\newtheorem{remark}[theorem]{Remark}
\theoremstyle{definition}
\newtheorem{definition}[theorem]{Definition}
\newtheorem{example}[theorem]{Example}
\newcommand{\R}{\mathbb{R}}
\newcommand{\ba}{\begin{array}}
\newcommand{\ea}{\end{array}}
\newcommand{\trns}[1]{\widehat{#1}\,}
\newcommand{\bthm}{\begin{theorem}}
\newcommand{\ethm}{\end{theorem}}
\newcommand{\bprop}{\begin{proposition}}
\newcommand{\eprop}{\end{proposition}}
\newcommand{\blemma}{\begin{lemma}}
\newcommand{\elemma}{\end{lemma}}
\newcommand{\bexmpl}{\begin{example}}
\newcommand{\eexmpl}{\end{example}}
\newcommand{\beqn}{\begin{equation}}
\newcommand{\eeqn}{\end{equation}}
\newcommand{\beqns}{\begin{equation*}}
\newcommand{\eeqns}{\end{equation*}}
\newcommand{\pt}{\partial}
\newcommand{\Rd}{\mathbb{R}^d}
\newcommand{\ol}{\overline}
\newcommand{\Hd}{\mathcal{H}^{d-1}}
\renewcommand{\leq}{\leqslant}
\renewcommand{\geq}{\geqslant}
\definecolor{mygreen}{rgb}{0.1,0.75,0.2}
\newcommand{\N}{\mathbb{N}}
\newcommand{\E}{\mathbf{E}}
\newcommand{\Om}{\Omega}
\newcommand{\FF}{\mathbf{F}}
\newcommand{\GG}{\mathbf{G}}
\newcommand{\calV}{\mathcal{V}}
\newcommand{\calF}{\mathcal{F}}
\newcommand{\FNm}{\mathbf{F}_{N,\underline m}}
\DeclareMathOperator{\Per}{Per}
\DeclareMathOperator{\dive}{div}
\DeclareMathOperator{\loc}{loc}
\title[Droplet breakup in the liquid drop model with background potential]{Droplet breakup in the liquid drop model with background potential}
\author{Stan Alama}
\address{Department of Mathematics and Statistics,
				McMaster University, Hamilton, ON}
\email{alama@mcmaster.ca}
\author{Lia Bronsard}
\address{Department of Mathematics and Statistics,
				McMaster University, Hamilton, ON}
\email{bronsard@mcmaster.ca}
\author{Rustum Choksi}
\address{Department of Mathematics and Statistics,
				McGill University, Montr\'{e}al, QC}
\email{rustum.choksi@mcgill.ca}
\author{Ihsan Topaloglu}
\address{Department of Mathematics and Applied Mathematics,
				Virginia Commonwealth University, Richmond, VA}
\email{iatopaloglu@vcu.edu}
\date{\today}                                        
\subjclass{35Q40, 35Q70, 49Q20, 49S05, 82D10}
\keywords{liquid drop model, droplet breakup, background potential, nonlocal isoperimetric problem,  generalized minimizer, concentration-compactness method}
\begin{document}

\begin{abstract}  We consider a variant of Gamow's liquid drop model, with a general repulsive Riesz kernel and a long-range attractive background potential with weight $Z$.  The addition of the background potential acts as a regularization for the liquid drop model in that it restores the existence of minimizers for arbitrary mass. 
We consider the regime of small $Z$ and characterize the structure of minimizers in the limit $Z\to 0$ by means of a sharp asymptotic expansion of the energy.  In the process of studying this limit we characterize all minimizing sequences for the Gamow model in terms of ``generalized minimizers''.
\end{abstract}

\maketitle

\baselineskip=16pt

\section{Introduction}\label{sec:intro}

We consider the  following variational problem:  
	\beqn \label{eqn:e_Z}
		e_Z(M) \,:=\, \inf \left\{ \E_Z(\Om) \colon  \Om \subset \Rd, \ |\Om|=M\right\},
	\eeqn
where the energy functional $\E_Z$ is defined as
	\beqn \label{eqn:energy}
		\E_Z(\Om) :=  \Per(\Om) + \int_{\Om}\!\int_{\Om} \frac{dx\,dy}{|x-y|^s} - Z\int_{\Om} \frac{dx}{|x|^p}
	\eeqn
with $0<p<s<d$ and $d \geq 2$. Here the first term is the perimeter of the set $\Om$ in the sense of Caccioppoli and is given by 
	\[ 
	\Per(\Om)=\sup \left\{
 \int_{\Om} \dive\phi \,dx \colon \phi \in C_{0}^1(\Rd;\Rd), \ \|\phi\|_{L^\infty(\Rd)} \leq 1\right\}.
	\]

\medskip

Our main motivation for this problem and the consideration of the small $Z$ regime stems from Gamow's {\em liquid drop model} \cite{Ga1930} which successfully models the shape of an atomic nucleus. 
Gamow's model is essentially equivalent to the minimization problem \eqref{eqn:e_Z} with 
 $d=3$,  Coulombic repulsion $s=1$, and $Z=0$:  
\beqn \label{eqn:LD}
   {\rm minimize} \quad \Per(\Om) + \int_{\Om}\!\int_{\Om} \frac{dx\,dy}{|x-y|} \quad {\rm over} \quad \Om\subset\R^3 \quad \text{ with } \quad |\Omega| = M.
   \eeqn
 This problem recently 
 resurfaced in the context of the Ohta--Kawasaki model for self-assembly of diblock copolymers (cf. \cite{ChPe2010,ChPe2011}), and has since attracted much mathematical interest (cf. \cite{BoCr14,FrKiNa2016,FrLi2015,KnMu2013,KnMu2014,KnMuNo2016,LO1,RW2014,RW2017} as well as \cite{ChMuTo2017} for a general overview). 
One of the fundamental characteristics of the liquid drop model is that it  predicts the spherical shape
of small nuclei and the non-existence of arbitrarily large nuclei.  It is
precisely the competition between opposing forces (the
surface tension and Coulombic repulsion) 
 which makes proving these predictions non-trivial. 
 The non-existence of minimizers for large $M$ is associated with the breakup of droplets tending to infinity.

From a physical point of view, though, one might expect other forces to be present which restore existence for larger values of $M$, predicting a structured configuration  of droplets.  
One way to introduce such effects is to introduce an attractive ``background 
nucleus", which is effected by adding to \eqref{eqn:LD} an external attractive potential of the form 
 \beqn \label{eq-ext-pot}  
 	V(x) = -  \frac{Z}{|x|^p},
 \eeqn
for $Z>0$ and $0<p \leq 1$. 
Here we take the ``background nucleus" to be centered at the origin, and of longer range, in the sense that they have slower decay than the Coulombic nonlocal interaction term.
The physical case of $p=1$ (Coulombic attraction) was recently considered 
 by  Lu and Otto  \cite{LO2}, and by Frank, Nam and van den Bosch \cite{FrNaBo2016} where it was proved that the effect  of $V$ simply increases the critical threshold in $M$ for the non-existence of minimizers.  On the other hand, choosing a potential with $p<1$ restores existence for all $M$ (cf. Theorem \ref{thm:anyZ} and \cite{AlBrChTo2017_1}); we may think of the addition of the attractive long-range potential as {\it regularizing} the generalized liquid drop model \eqref{eqn:energy}.  We then focus on the  structure of minimizers in the small $Z$ regime. In doing so, we completely describe particular configurations of {\it generalized minimizers} (cf. \cite{KnMuNo2016}, Definition \ref{def:genmin}) of the liquid drop model.

\medskip

Our first result confirms that the presence of the external  potential  \eqref{eq-ext-pot} with  $p<s$ indeed restores existence for all masses $M>0$.  
\begin{theorem}\label{thm:anyZ}
For all $Z>0$ and for any $M>0$, the minimum $e_Z(M)$ is attained.
\end{theorem}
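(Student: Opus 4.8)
The plan is to use the direct method of the calculus of variations via the concentration-compactness principle, the key point being that the background potential $-Z|x|^{-p}$ penalizes mass escaping to infinity strongly enough (because $p<s$) to rule out dichotomy and vanishing. First I would fix $M>0$ and $Z>0$, take a minimizing sequence $\Om_n$ for $e_Z(M)$, and note that since the competitor $\Om = B$ (a ball of volume $M$ centered at the origin) has $\E_Z(B) < \Per(B) + \iint_B\!\int_B |x-y|^{-s}\,dx\,dy < +\infty$, we have a uniform bound $\E_Z(\Om_n) \leq C$. Combined with $\int_{\Om_n}|x|^{-p}\,dx \leq C(M)$ (which holds by rearrangement: among sets of measure $M$, $\int_\Om |x|^{-p}$ is largest on a ball centered at the origin, since $p<d$), this gives $\Per(\Om_n) \leq C$, hence by BV compactness a subsequence of $\mathbbm{1}_{\Om_n}$ converges in $L^1_{\loc}(\Rd)$ and a.e. to $\mathbbm{1}_{\Om}$ for some set $\Om$ of finite perimeter with $|\Om| \leq M$.

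The heart of the argument is upgrading this local convergence to $|\Om| = M$, i.e.\ ruling out loss of mass at infinity. Here I would apply the concentration-compactness lemma to the measures $\mu_n = \mathbbm{1}_{\Om_n}\,dx$ (each of total mass $M$). Vanishing is excluded immediately: if $\sup_y \int_{B_R(y)} \mathbbm{1}_{\Om_n} \to 0$ for every $R$, then, as is standard for this type of functional, the nonlocal term $\iint |x-y|^{-s}$ goes to zero while the perimeter stays bounded, forcing $\E_Z(\Om_n)$ down to $-Z\int_{\Om_n}|x|^{-p} \geq -ZC(M)$; comparing with the strictly smaller value $e_Z(M) \le \E_Z(B)$ and using that vanishing also forces spreading, one derives a contradiction (alternatively: vanishing plus bounded perimeter is impossible for sets of fixed positive volume by a covering/isoperimetric argument). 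For dichotomy, the standard cut-and-paste splits $\Om_n$ into a piece $\Om_n^{(1)}$ of volume $\to m \in (0,M)$ contained in a ball $B_{R}(0)$ (we may center it at the origin since the potential pins the energetically favorable piece there — more precisely, after translating the two pieces to optimize $\E_Z$, the far-away piece sees potential $\to 0$) and a piece $\Om_n^{(2)}$ of volume $\to M-m$ escaping to infinity, with the cross interaction and the extra perimeter from cutting being negligible; this yields
\[
e_Z(M) \,\geq\, e_Z(m) + e_0(M-m),
\]
where $e_0$ denotes the $Z=0$ (Gamow) problem. The contradiction comes from the strict subadditivity failure: I would show $e_Z(M) < e_Z(m) + e_0(M-m)$ for every $m \in [0,M)$, because keeping all the mass near the origin to feel the potential strictly beats sending any positive fraction $M-m>0$ to infinity — quantitatively, moving a ball of volume $M-m$ from infinity to a far-but-finite location adjacent to the first piece changes the potential term by a strictly negative amount of order $-Z R^{-p}$ while the added nonlocal cross-term and the (zero) perimeter cost are of order $R^{-s}$ with $s>p$, hence for $R$ large the gain beats the cost. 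This strict inequality is exactly where the hypothesis $p<s$ is used, and it is the main obstacle: one must make the competitor construction precise and check the $R\to\infty$ expansion $-ZR^{-p} + O(R^{-s}) < 0$ rigorously (some care is needed because the ``mass at infinity'' need not itself be a single ball, but one may first replace $\Om_n^{(2)}$ by a near-minimizer of $e_0(M-m)$ up to an error $\to 0$).

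Once compactness is established we have $|\Om| = M$. Lower semicontinuity of the perimeter under $L^1_{\loc}$ convergence, together with lower semicontinuity (indeed continuity, via Fatou and the $L^1$ bound on $|x|^{-p}$ and the uniform integrability of the Riesz kernel in the relevant range) of the remaining two terms, gives $\E_Z(\Om) \leq \liminf_n \E_Z(\Om_n) = e_Z(M)$, so $\Om$ is a minimizer. I would remark that the role of $0<p<s<d$ is threefold: $p<d$ makes $\int_\Om |x|^{-p}$ finite and controlled by volume; $s<d$ makes the Riesz term finite; and the strict inequality $p<s$ is what defeats dichotomy. The only genuinely delicate estimate is the strict subadditivity, and that is where I would spend the bulk of the write-up.
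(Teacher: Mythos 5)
Your proposal is correct and follows essentially the same route as the paper: a concentration-compactness splitting yields the lower bound $e_Z(M)\geq e_Z(m)+e_0(M-m)$, which is then contradicted by a competitor placing the escaping mass at a large but finite distance $R$, where the potential gain $-ZR^{-p}$ beats the $O(R^{-s})$ cross-interaction cost since $p<s$. The only difference is packaging: the paper runs the splitting through a Frank--Lieb-based lemma that hands back actual (hence essentially bounded) minimizers $E^i$ of the $e_0(m^i)$ problems, which slightly streamlines the competitor construction compared to your use of a (suitably truncated) near-minimizer of $e_0(M-m)$.
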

This result is a generalization of the result in 
\cite{AlBrChTo2017_1}, and for convenience we will present an outline of the proof (which differs from that in \cite{AlBrChTo2017_1})  in section 2.
Our principal interest is in studying minimizers of $\E_Z$ in the limit $Z\to 0$.  For $d \geq 2$, it is well-known that there exists $m^*=m^*(d,s)>0$ such that the $Z=0$ problem,
	\beqn \label{eqn:e_0}
		e_0(M) \,:=\, \inf \left\{ \E_0(\Om) \colon  \Om \subset \Rd, \ |\Om|=M\right\}
	\eeqn
does not admit minimizers for $M>m^*$ and $s\in(0,2)$ (see \cite[Theorem 2.5]{KnMu2013} and \cite[Theorem 3.3]{KnMu2014}, and also \cite{LO1} and \cite{FrKiNa2016} for the case $d=3$, $s=1$).  Thus, when $M>m^*$ a sequence of minimizers $\Om_Z$ of the functional $\E_Z$ must lose compactness as $Z\to 0$.  We show this is indeed the case: for small $Z>0$, $\Om_Z$ is composed of a finite number of widely spaced disjoint compact components, separated by a distance on the order of $Z^{-1/(s-p)}$.  Moreover, we show that the components are arranged in a way which (after rescaling by $Z^{1/(s-p)}$) optimizes a discrete interaction energy,
\begin{equation}\label{eq:interactN}
\FNm(y_0,\dots,y_N):= \sum_{i,\, j=0 \atop i\neq j}^N {m^i\, m^j\over |y_i-y_j|^s}  
        -\sum_{i=1}^N \frac{m^i}{|y_i|^p},
\end{equation}
where $\underline m=(m^0,\dots,m^N)$ with $\sum_{i=0}^N m^i=M$, and $y_0,\dots, y_N$ in the admissible class
\beqn\label{eqn:admiss}
   \Sigma_N:=\{(y_0,\dots,y_N)\subset\R^{3(d+1)}: \ y_0=0\}.  
\eeqn

Our main result describes the structure of minimizers of $\E_Z$ for small $Z>0$:
\begin{theorem}\label{thm:Z20}
Let $\Om_Z$ be minimizers of $\E_Z$ for $Z>0$.  Then for any sequence $Z\to 0$ there exists a subsequence $Z_n\to 0$ so that either

\noindent {\bf (A)} \, there exists a set $E^0$ with $|E^0|=M$ which minimizes $e_0(M)$, for which 
$\Om_{Z_n}\to E^0$ globally, i.e., $\chi_{\Om_{Z_n}}\to\chi_{E^0}$ in $L^1(\R^d)$ as $n\to\infty$; or

\noindent {\bf (B)} \, there exist: 
\begin{enumerate}
\item $N\in \mathbb{N}$;
\item  $(m^0,\dots,m^N)$, $m^i>0$ with $\sum_{i=0}^N m^i=M$;
\item  $x_n^0,\ldots, x_n^N\in\Rd$, with $x_n^0=0$, and $|x^i_n|\to\infty$ for $i\neq 0$. and $|x_n^i-x_n^j|\to\infty$ for $i\neq j$ as $n\to\infty$;
\item  $E^0,\ldots,E^N$ compact sets of finite perimeter, with $|E^i|=m^i\neq 0$ for $i=0,\ldots,N$;
\end{enumerate}
such that $\Om_n:=\Om_{Z_n}$ satisfies the following:
	\begin{gather}
	\label{eq:Z20.0}
	\begin{gathered}
	 \partial^*\Om_n \in C^{1,\frac12}, \ \text{ and for fixed } R>0 \text{ such that all } E^i \subset B_R(0), \\ (\partial^*\Om_n- x^i_n)\cap B_R(0) \to \partial^* E^i \text{ in } C^{1,\alpha} \text{ for all } \alpha\in (0,1/2); \\
	 \end{gathered}  \\    
	\label{eq:Z20.1}
		\biggl| \Om_n \ \triangle \left[ E^0 \cup \bigcup_{i=1}^N (E^i + x^i_n)\right] 
		\biggr|\longrightarrow 0; \\
	\label{eq:Z20.2}
		E^i \text{ attains the minimum in (\ref{eqn:e_0}), i.e., } e_0(m^i)=\E_0(E^i), \ i=0,1,\ldots,N;  \\
	\label{eq:Z20.3}
	\left\{ \begin{gathered}
		Z_n^{{1\over s-p}} x^i_n \longrightarrow y_i \text{ as } n\to\infty, 
  				\ i=1,\ldots,N, \\
				\text{ where } (0,y_1,\dots,y_N) \text{ minimize  $\FNm$ over $\Sigma_N$.}
				\end{gathered}\right.
	\end{gather}
\end{theorem}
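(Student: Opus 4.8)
The plan is to run a concentration-compactness argument on the sequence $\chi_{\Om_n}$, $\Om_n = \Om_{Z_n}$, and then identify each bubble as a minimizer for the $Z=0$ problem, finally pinning down the positions via a $\Gamma$-expansion of the energy. First I would establish a priori bounds: since $\E_{Z_n}(\Om_n)\le e_{Z_n}(M)\le e_0(M)$ (testing with any fixed competitor) and since the background term $-Z_n\int_{\Om_n}|x|^{-p}\,dx$ is bounded below on sets of measure $M$ (by the rearrangement/Riesz bound $\int_\Om |x|^{-p}\le \int_{B}|x|^{-p}$ where $|B|=M$), we get $\Per(\Om_n)\le C$ and a uniform bound on the Riesz repulsion. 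Regularity theory for quasi-minimizers of the perimeter (the nonlocal and potential terms being lower-order perturbations, as in \cite{KnMu2014}) gives the $C^{1,1/2}$ bound on $\partial^*\Om_n$ in \eqref{eq:Z20.0}, uniformly in $n$; this also yields density estimates that prevent vanishingly thin tentacles.

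Next I would apply the concentration-compactness lemma to the measures $\mu_n = \chi_{\Om_n}\,\mathcal L^d$. Either the mass stays together (possibly after no translation, since the potential anchors one piece near the origin) — leading to alternative \textbf{(A)} after showing the limit set minimizes $e_0(M)$ — or the mass splits. Splitting cannot ``vanish'' because of the uniform density estimates, so along a subsequence $\Om_n$ decomposes into finitely many well-separated clusters: there are translations $x_n^i$ with $|x_n^i - x_n^j|\to\infty$ such that $\chi_{\Om_n}(\cdot + x_n^i)$ converges in $L^1_{\rm loc}$ (and, by the uniform regularity, in $C^{1,\alpha}$ on compacts) to $\chi_{E^i}$, with $\sum|E^i| = M$ (no mass lost). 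One piece, say $E^0$, must stay within bounded distance of the origin — otherwise the whole configuration could be translated toward the origin to strictly lower the (negative) potential term, contradicting minimality; set $x_n^0 = 0$. The finiteness of $N$ follows because each $E^i$ carries mass bounded below: a cluster of very small mass $m^i$ has isoperimetrically large perimeter-to-... actually one argues that merging two clusters or absorbing a tiny one into a larger bubble lowers $\E_0$ (subadditivity $e_0(a+b) < e_0(a)+e_0(b)$ strictly, a known property of $e_0$ below threshold, while above threshold pieces are capped at mass $m^*$), so only finitely many clusters of mass $\ge$ some $\delta>0$ survive. This gives items (1)--(4) and \eqref{eq:Z20.1}.

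To get \eqref{eq:Z20.2}, that each $E^i$ is itself an $e_0$-minimizer, I would use a cut-and-paste comparison: replace the $i$-th cluster of $\Om_n$ by (a translate, far away, of) an $e_0(m^i)$-minimizer if one existed, or more robustly observe that $\E_0(E^i) \ge e_0(m^i)$ trivially while lower semicontinuity plus the energy splitting $\liminf \E_{Z_n}(\Om_n) \ge \sum_i \E_0(E^i) \ge \sum_i e_0(m^i)$ combined with the upper bound $e_{Z_n}(M) \le \sum_i e_{Z_n}(m^i) + (\text{interaction}) \to \sum_i e_0(m^i)$ forces equality $\E_0(E^i) = e_0(m^i)$ for each $i$. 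For the final and most delicate claim \eqref{eq:Z20.3}, the separation scale $Z_n^{-1/(s-p)}$ emerges from balancing the two interaction-type corrections: when clusters sit at positions $x^i_n = Z_n^{-1/(s-p)} y_i + o(\cdot)$, the cross Riesz energy $\sum_{i\ne j} m^i m^j |x^i_n - x^j_n|^{-s}$ and the potential correction $-Z_n\sum_i m^i|x^i_n|^{-p}$ are both of order $Z_n^{s/(s-p)}$, so the $O(Z_n^{s/(s-p)})$ term in the energy expansion is exactly $Z_n^{s/(s-p)}\,\FNm(0,y_1,\dots,y_N) + o(Z_n^{s/(s-p)})$; hence minimality of $\Om_n$ forces $(0,y_1,\dots,y_N)$ to minimize $\FNm$ over $\Sigma_N$. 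The main obstacle, I expect, is making this last step rigorous — one must show that the far-field interactions are genuinely captured by evaluating the Riesz and potential kernels at the centers (i.e., that replacing $|x-y|^{-s}$ by $|x^i_n - x^j_n|^{-s}$ for $x \in E^i + x^i_n$, $y \in E^j + x^j_n$ costs only $o(Z_n^{s/(s-p)})$, using $|x^i_n - x^j_n| \to \infty$ at the right rate and the compactness of the $E^i$), and symmetrically for the potential — together with a matching upper bound construction placing near-optimal bubbles at the rescaled minimizing positions, which requires knowing $\FNm$ attains its infimum on $\Sigma_N$ (coercivity of $\FNm$: the potential term $-\sum m^i |y_i|^{-p}$ penalizes $|y_i|\to\infty$ while the repulsion penalizes collisions, and $p<s$ ensures the balance).
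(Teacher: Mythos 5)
Your overall architecture is the same as the paper's: concentration--compactness to produce the bubbles, uniform quasi-minimality of the perimeter to get the $C^{1,\alpha}$ regularity and the clean (no mass lost, empty remainder) decomposition, a translation-to-the-origin argument to anchor $E^0$, matched upper and lower bounds to force each $E^i$ to be an $e_0(m^i)$-minimizer, and a linearization of the Riesz and potential kernels at the cluster centers to extract the $Z_n^{s/(s-p)}$-order term $\FNm$. The technical points you flag as "the main obstacle" (kernel replacement error $O(R_{n,ij}^{-s-1})=o(Z_n^{s/(s-p)})$, and a matching upper bound obtained by placing the actual components at $Z_n^{-1/(s-p)}a^i$ with $(a^i)$ optimal for $\FNm$) are exactly how the paper proceeds.

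There is, however, one genuine gap: your justification for why $\FNm$ attains its infimum on $\Sigma_N$ (and, what is actually needed in the last step, why the rescaled centers $\xi_n^i=Z_n^{1/(s-p)}x_n^i$, which you show form a minimizing sequence for $\FNm$, stay bounded). You assert "coercivity of $\FNm$: the potential term $-\sum m^i|y_i|^{-p}$ penalizes $|y_i|\to\infty$." This is backwards: as $|y_i|\to\infty$ that term tends to $0$, not to $+\infty$, so $\FNm$ is \emph{not} coercive and nothing a priori prevents a minimizing sequence from sending some points to infinity (the paper explicitly flags this and devotes Proposition~\ref{prop:finitedim} to it). The correct argument is a strict-improvement comparison, not coercivity: if along a minimizing sequence the points $x_n^{k+1},\dots,x_n^N$ escape while $x_n^1,\dots,x_n^k$ converge, replace the escaping points by $Ry_1,\dots,Ry_{N-k}$ on a sphere of large radius $R$; the attractive gain is $-CR^{-p}$ while the added repulsion is only $O(R^{-s})$, and since $p<s$ the total energy strictly drops for $R$ large, contradicting that the split configuration realizes the infimum. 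Without this step (or an equivalent one), your final conclusion that $\xi_n^i\to y_i$ with $(0,y_1,\dots,y_N)$ minimizing $\FNm$ does not follow: a priori the $\xi_n^i$ could diverge, i.e.\ the true separation could occur on a scale $\gg Z_n^{-1/(s-p)}$.
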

Here $\pt^*$ denotes the reduced boundary of a set.  By regularity theory of perimeter minimizing sets (and more generally, $(\omega,r)$-minimizers of perimeters) the topological boundary $\partial E^i$ differs from the reduced boundary by a set of small Hausdorff dimension, $\dim_{\mathcal{H}}(\partial E^i\setminus\partial^* E^i)\leq n-8$.

We note the distinction between the existence result in Theorem~\ref{thm:Z20} of $\E_Z$ and those of the Gamow functional:  for Gamow's model, minimizers only exist for small mass $M$, and must be connected. On the other hand, for $Z>0$ but small, minimizers of $\E_Z$ always exist for any $M$ but must be {\em  disconnected} for mass $M>m^*$.

The proof of Theorem~\ref{thm:Z20} relies on a  general concentration-compactness lemma  
(Lemma~\ref{lem:CC}) 
for minimizing sequences of $\E_Z$. We prove this result using a recent compactness result for sequences of Caccioppoli sets by Frank and Lieb \cite{FrLi2015}.   It is in this lemma that we first encounter the effect of splitting of the support of minimizers, when the total mass is large.  The resulting structure (as described by conclusions \eqref{eq:Z20.1} and \eqref{eq:Z20.2} of Theorem~\ref{thm:Z20},) was formalized by Kn\"upfer, Muratov, and Novaga \cite[Definition 4.3]{KnMuNo2016}); we adapt their definition to $\E_Z$:

\begin{definition}\label{def:genmin}
Let $Z\geq 0$ and $M>0$.  A \underbar{generalized minimizer} of $\E_Z$ is a finite collection $(E^0,E^1,\dots,E^N)$ of sets of finite perimeter, such that:
\begin{enumerate}
\item  $|E^i|:=m^i$, $i=0,1,\dots,N$, with $\sum_{i=0}^N m^i = M$;
\item  $E^0$ attains the minimum in $e_Z(m^0)$ and $E^i$ attains $e_0(m^i)$, $i=1,\dots,N$;
\item  $e_Z(M)= e_Z(m^0) + \sum_{i=1}^N e_0(m^i)$.
\end{enumerate}
\end{definition}
In \cite{KnMuNo2016} the authors prove the existence of generalized minimizers for the Gamow problem $Z=0$.  Here we improve their result: it follows immediately from the concentration lemma (Lemma 6) that {\it any} minimizing sequence of $\E_Z$, for $Z\geq 0$, is completely characterized (up to sets of vanishingly small measure, and along subsequences) by a generalized minimizer:

\begin{corollary} \label{thm:gen_min}  Let $Z\geq 0$, $M>0$, and suppose $\{\Om_n\}_{n\in\N}$ is any minimizing sequence for $e_Z(M)$.  Then, there is a subsequence, $N\geq 0$, and a generalized minimizer $(E^0,E^1,\dots,E^N)$ of $\E_Z$, with 
$$   \left\vert \Om_n \ \triangle \left[ E^0 \cup \bigcup_{i=1}^N (E^i + x^i_n)\right] 
		\right\vert \longrightarrow 0 ,  $$
for a sequence of translations $(x^i_n)_{n\in\N}^{i=1,\dots,N}$.
\end{corollary}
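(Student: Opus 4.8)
The plan is to deduce Corollary~\ref{thm:gen_min} directly from the concentration-compactness lemma (Lemma~\ref{lem:CC}), applied \emph{with $Z$ fixed} rather than sent to zero. The lemma is stated for minimizing sequences of $\E_Z$; I would first invoke it to obtain a subsequence (not relabeled), an integer $N \geq 0$, masses $m^0, \dots, m^N > 0$ with $\sum_{i=0}^N m^i = M$, translations $x_n^1, \dots, x_n^N$ with $|x_n^i| \to \infty$ and $|x_n^i - x_n^j| \to \infty$ for $i \neq j$, and sets of finite perimeter $E^0, E^1, \dots, E^N$ with $|E^i| = m^i$, such that the symmetric-difference convergence
\[
\left\vert \Om_n \triangle \left[ E^0 \cup \bigcup_{i=1}^N (E^i + x^i_n) \right] \right\vert \to 0
\]
holds, and such that the energy splits as $e_Z(M) = \E_Z(E^0) + \sum_{i=1}^N \E_0(E^i)$. (Here the asymmetry between the $i=0$ piece, which stays near the origin and feels the background potential, and the $i\geq 1$ pieces, which escape to infinity where the potential vanishes, is exactly what produces $\E_Z$ for $E^0$ and $\E_0$ for the others; this is built into the statement of Lemma~\ref{lem:CC}.) The reduced-boundary $C^{1,\alpha}$ convergence in \eqref{eq:Z20.0} is not needed here and can be ignored.

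The remaining task is to verify that $(E^0, \dots, E^N)$ is a \emph{generalized minimizer} in the sense of Definition~\ref{def:genmin}, i.e.\ to upgrade the bare energy identity into the three listed conditions. Condition (1) is immediate from the mass bookkeeping above. For conditions (2) and (3), the key inequality to establish is the subadditivity-type lower bound
\[
e_Z(M) \leq e_Z(m^0) + \sum_{i=1}^N e_0(m^i),
\]
which follows from a standard ``place the pieces far apart'' competitor construction: take near-optimal sets for $e_Z(m^0)$ (centered at the origin) and for $e_0(m^i)$, $i \geq 1$, translate the latter far from the origin and from each other, and estimate that the cross interaction terms $\iint |x-y|^{-s}$ between distant components, as well as the potential energy $-Z\int |x|^{-p}$ of the far-away pieces, all tend to zero as the separations grow. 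Combined with $\E_Z(E^0) \geq e_Z(m^0)$ and $\E_0(E^i) \geq e_0(m^i)$ (trivially, since $E^0$ and $E^i$ are \emph{admissible competitors} for the respective problems), the energy identity $e_Z(M) = \E_Z(E^0) + \sum_i \E_0(E^i)$ forces equality in every one of these inequalities: hence $\E_Z(E^0) = e_Z(m^0)$, $\E_0(E^i) = e_0(m^i)$ (which is condition (2)), and $e_Z(M) = e_Z(m^0) + \sum_{i=1}^N e_0(m^i)$ (which is condition (3)).

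I expect the only real content to be already absorbed into Lemma~\ref{lem:CC} — in particular the fact that a minimizing sequence decomposes into finitely many components with the stated energy splitting, which is where the concentration-compactness dichotomy, the Frank--Lieb compactness result, and the vanishing of long-range interactions are all used. Given that lemma, the present corollary is essentially a repackaging: the main (mild) point to be careful about is that the energy identity \emph{plus} the elementary lower bounds $\E_Z(E^0)\geq e_Z(m^0)$, $\E_0(E^i)\geq e_0(m^i)$, \emph{plus} the competitor inequality together pin down each piece as a genuine minimizer, which is what promotes ``minimizing sequence'' to ``generalized minimizer.'' One should also note $N$ may be $0$, in which case the statement reduces to $\Om_n \to E^0$ in $L^1$ with $E^0$ minimizing $e_Z(M)$, consistent with the convention $\sum_{i=1}^0 = 0$. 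No further regularity or uniqueness claims are asserted, so nothing beyond this is required.
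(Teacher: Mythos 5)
Your proposal is correct and follows essentially the same route as the paper: the corollary is read off from Lemma~\ref{lem:CC} (whose conclusions (iii)--(v) already give condition (2) of Definition~\ref{def:genmin} and the lower bound $e_Z(M)\geq e_Z(m^0)+\sum_i e_0(m^i)$), with the reverse inequality supplied by the ``place the pieces far apart'' subadditivity \eqref{eq:subadd}, exactly as you argue, and the symmetric-difference convergence coming from $F^i_n\to E^i$ globally, $|\Om^N_n|\to 0$, and the disjoint decomposition \eqref{eq:anyZ1}. The only nitpick is that for $Z=0$ the lemma does not guarantee $m^0>0$, but this is harmless for the statement.
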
 

In the context of generalized minimizers, Theorem~\ref{thm:Z20} asserts that the family $\Om_Z$ of minimizers of $\E_Z$ makes a particular selection of a generalized minimizer (the sets $\{E^i\}_{i=1,\dots,N}$ obtained in the theorem,) for the generalized liquid drop problem $\E_0$.  We note that the special choice of generalized minimizer obtained this way may not be canonical, in the sense of viscosity solutions in PDE;  the sets and the pattern they form as $Z\to 0$ depend on the choice of external potential.

The concept of generalized minimizers is a familiar one in applications of concentration compactness, and is intimately related to the notion of ``critical points at infinity'', introduced by Bahri \cite{Bahri89} in his study of existence of solutions for Yamabe-type equations and other PDE problems with loss of compactness.  (See \cite{StruweVM} for other contexts involving critical points or functionals ``at infinity''.)

In addition to the concentration-compactness structure given in Lemma~\ref{lem:CC}, the proof of Theorem~\ref{thm:Z20} requires an expansion of the energy $\E_Z$ up to the third-order term in $Z$ (see Remark~\ref{expansion} below). In order to establish this, we combine the compactness of a sequence of minimizers $\Om_Z$ with regularity results stemming from the classical regularity properties of the perimeter functional improving the error estimates in \cite{FrLi2015}.  Similar methods were employed in a previous paper \cite{AlBrChTo2016}, concerning concentration of droplets in a sharp interface model of diblock copolymers under confinement.

We note that the limiting finite dimensional energy $\FNm(y_0,\dots,y_N)$ (unlike its counterpart in \cite{AlBrChTo2016}) is not coercive, and so it is not clear {\it a priori} that minimizing sequences for this energy should not split, with some number of points diverging to infinity.
However, in Proposition~\ref{prop:finitedim} we will show that this finite dimensional discrete variational problem attains its minimizer for all choices of $N$ and the masses $\underline{m}$, a result which we will use in studying the limit $Z\to 0$ but which is itself of independent interest.  

In light of Theorem~\ref{thm:Z20}, it is natural to ask if the family of functionals $\E_Z$ has a second-order $\Gamma$-limit, involving generalized minimizers of the Gamow functional and the finite dimensional interaction energy $\FNm$.  Such a result would imply the existence of local minimizers for $\E_Z$, with small $Z>0$.  However, the method to prove Theorem~\ref{thm:Z20} uses regularity properties of minimizers in a fundamental way, and does not directly extend to the more general setting of $\Gamma$-convergence.


Finally, Bonacini and Cristoferi \cite[Theorem 2.11]{BoCr14} have shown that there exists a critical value $\bar s(d)$ of the power in the Riesz kernel such that if  $s\in (0, \bar s(d))$, then the minimizers of $e_0(M)$ (when they exist) must be balls.  In other words, for small $s$, the critical mass for existence exactly coincides with the critical value at which minimizers must be balls.  In this case, we have a near-complete description of minimizers for small $Z>0$, as a finite configuration of balls of {\em equal radius}:

\medskip

\begin{theorem}\label{thm:BC}
Assume $0<s<\bar s(d)$, and $0<p<s<d$.  Then, the sets $E^i$ appearing in Theorem~\ref{thm:Z20} are all balls with equal volume $m^i=M/(N+1)$, $i=0,1,\dots,N$.
\end{theorem}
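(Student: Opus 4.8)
The plan is to deduce the rigidity from the Bonacini--Cristoferi classification and then to pin down the masses by analysing the one-dimensional mass-splitting problem encoded in the generalized-minimizer structure. First, by conclusion~\eqref{eq:Z20.2} of Theorem~\ref{thm:Z20} each $E^i$ attains the minimum in~\eqref{eqn:e_0}; in particular $e_0(m^i)$ is attained, so necessarily $m^i\le m^*$ (recall $e_0(m)$ has no minimizer for $m>m^*$). Since $0<s<\bar s(d)$, \cite[Theorem~2.11]{BoCr14} says that every minimizer of $e_0(m)$ is a ball, hence each $E^i$ is, up to a translation, a ball $B_{r_i}$ with $|B_{r_i}|=m^i$. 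It then only remains to show that all of the $m^i$ are equal.

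To this end I would first reduce to a purely one-dimensional problem. A scaling computation gives, for $0<m\le m^*$,
\[
  e_0(m)=\Per(B_r)+\int_{B_r}\!\int_{B_r}\frac{dx\,dy}{|x-y|^s}=\phi(m):=\kappa_1 m^{\beta}+\kappa_2 m^{\delta},\qquad \beta:=\tfrac{d-1}{d},\ \ \delta:=\tfrac{2d-s}{d},
\]
with $\kappa_1,\kappa_2>0$, $\beta\in(0,1)$, $\delta\in(1,2)$. Theorem~\ref{thm:Z20} exhibits $(E^0,\dots,E^N)$ as (the components of) a generalized minimizer of $\E_0$ in the sense of Definition~\ref{def:genmin}, so in particular $e_0(M)=\sum_{i=0}^N e_0(m^i)=\sum_{i=0}^N\phi(m^i)$; since on the other hand widely separated balls are admissible competitors for $e_0(M)$, one has $e_0(M)\le\sum_j\phi(\mu_j)$ for \emph{every} finite decomposition $M=\sum_j\mu_j$ into positive masses. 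Hence $(m^0,\dots,m^N)$ minimizes $\sum_j\phi(\mu_j)$ over all such decompositions, and the theorem is equivalent to the statement that this discrete minimum is attained only by the equal decomposition.

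The crux is therefore a one-variable convexity analysis. From $\phi''(m)=\kappa_1\beta(\beta-1)m^{\beta-2}+\kappa_2\delta(\delta-1)m^{\delta-2}$ one sees that $\phi$ is strictly concave on $(0,\mu_*)$ and strictly convex on $(\mu_*,\infty)$ for a unique $\mu_*>0$. At the minimizing decomposition every part is positive, so the Lagrange condition $\phi'(m^i)\equiv\lambda$ holds; as $\phi'$ is strictly monotone on each of $(0,\mu_*)$ and $(\mu_*,\infty)$, each $m^i$ equals one of the (at most) two roots $m_-\le\mu_*\le m_+$ of $\phi'=\lambda$. If two parts equalled $m_-<\mu_*$, strict concavity there would let me perturb them to $m_-\pm\varepsilon$ and strictly lower $\sum_j\phi(\mu_j)$, a contradiction; so at most one part lies on the concave branch. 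Finally, if exactly one part equals $m_-<\mu_*$ and all others equal $m_+$, then transferring mass between the $m_-$-part and an $m_+$-part has second variation $\phi''(m_-)+\phi''(m_+)$, and it suffices to show this is negative, since then the configuration would fail to be even a local minimizer. Using $\phi'(m_-)=\phi'(m_+)$ to eliminate the ratio $\kappa_1/\kappa_2$ and writing $t:=m_+/m_->1$, the inequality $\phi''(m_-)+\phi''(m_+)<0$ reduces, after clearing the common power of $m_-$, to a scale-invariant one-variable inequality $P_{\beta,\delta}(t)>0$ for $t>1$, where $P_{\beta,\delta}(1)=0$, $P_{\beta,\delta}'(1)>0$ and $P_{\beta,\delta}(t)\to+\infty$ as $t\to\infty$; this can be verified throughout the admissible range $\beta\in(0,1)$, $\delta\in(1,2)$. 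Hence every $m^i=m_+$, i.e.\ $m^i=M/(N+1)$, and by the first step each $E^i$ is a ball of that volume. I expect this last point — ruling out a single component on the concave branch, which amounts to the one-variable inequality $P_{\beta,\delta}(t)>0$ on all of $t>1$ — to be the main obstacle; everything else is a direct appeal to \cite{BoCr14}, to the already-established generalized-minimizer structure, and to elementary convexity.
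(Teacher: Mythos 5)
Your route is genuinely different from the paper's. The paper works directly with the Euler--Lagrange equation \eqref{eq:EL1} on $\partial^*\Om_{Z_n}$: using the $C^{1,\alpha}$ convergence of the boundaries it passes to the limit in the weak form of \eqref{eq:EL1} component by component and concludes that all the $E^i$ share a single limiting Lagrange multiplier $\lambda_0$, whence (being balls) a single radius. Your reduction to the discrete problem of minimizing $\sum_j\phi(\mu_j)$, with $\phi(m)=\kappa_1 m^{(d-1)/d}+\kappa_2 m^{(2d-s)/d}$, over partitions of $M$ is correct: the identity $e_0(M)=\sum_{i=0}^N e_0(m^i)$ does hold for the configuration produced by Theorem~\ref{thm:Z20} (it is part of the generalized-minimizer structure, Definition~\ref{def:genmin} with $Z=0$), and widely separated balls give the matching upper bound. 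Note that your stationarity condition $\phi'(m^i)\equiv\lambda$ is exactly the paper's ``equal Lagrange multipliers'' statement, and since $\phi'$ is decreasing-then-increasing it is \emph{not} injective; your second-order analysis is therefore doing genuine work at precisely the point the paper's final sentence passes over quickly.

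The gap is at the step you yourself flag as the main obstacle: the inequality $\phi''(m_-)+\phi''(m_+)<0$ whenever $\phi'(m_-)=\phi'(m_+)$ with $m_-<\mu_*<m_+$ is asserted, not proved, and the one structural claim you make about it is false. As $t=m_+/m_-\to1^+$ both roots collapse to the inflection point $\mu_*$, where $\phi''(\mu_*)=0$, so $\phi''(m_-)+\phi''(m_+)\to0$ and in fact $P_{\beta,\delta}(1)=P_{\beta,\delta}'(1)=P_{\beta,\delta}''(1)=0$, not $P'(1)>0$: the tangency is of third order, and the sign near $t=1$ is governed by $\phi''''(\mu_*)$ (which one checks is negative, using $\beta<1<\delta<2$ --- but this already requires a computation). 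The inequality is in fact true for all $t>1$ and all $\beta\in(0,1)$, $\delta\in(1,2)$, but it needs a real argument. One that works: rewrite the claim as $H_\beta(t)>H_\delta(t)$ where $H_\gamma(t):=|\gamma-1|\,(1+t^{\gamma-2})/|t^{\gamma-1}-1|$, and show $\gamma\mapsto H_\gamma(t)$ is strictly decreasing on $(0,2)$; after the substitution $x=(\gamma-1)\ln t$ this reduces to showing that $x\mapsto x+(t+1)\,x/(e^x-1)$ is decreasing on $(-\ln t,\ln t)$, which follows from the convexity of $x/(e^x-1)$ together with the elementary inequality $(L-2)e^L+L+2\geq 0$ for $L\geq 0$. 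Without an argument of this kind your proof is incomplete exactly at its crux.
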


The idea behind the proof of Theorem~\ref{thm:BC} is that each diverging component of a minimizer of $\E_Z$ inherits the same Lagrange multiplier, and so each element $E^i$ of the generalized minimizer ``at infinity'' satisfies the same Euler-Lagrange equation.  When the minimizers are balls, the radius is uniquely determined by the Lagrange multiplier.  As the first part of the argument holds for any values of $s,d,M$, we in fact conjecture that the equipartition of mass between the components of the generalized minimizers is true whether the minimizers are balls or not.

\medskip


The liquid drop model \eqref{eqn:LD} was introduced to describe nuclear structure.  In fact it appears in various other contexts (mathematical and physical) to describe systems with competition between short- and long-range effects on many scales, from the nuclear to nanoscale (in condensed matter systems), to centimeter scale (for fluids and autocatalytic
reaction-diffusion systems,) and even on cosmological scales.
In the original quantum context for the atomic nucleus, we do not know of any physical interpretation of such a background potential, even one of  Coulombic type ($p=1$).  
However, in the wider context (particularly the cosmological context), 
 consideration of super-Newtonian forces appears in several theories. In fact, the validity of Newton's law at long distances has been a longstanding interest in physics. As Finzi notes in \cite{Finzi1963} stability of clusters of galaxies implies stronger attractive forces at long distances than that predicted by Newton's law. Motivated by similar observations, in \cite{Milgrom1983} Milgrom introduced the modified Newtonian dynamics (MOND) theory which suggests that the gravitational force experienced by a star in the outer regions of a galaxy must be stronger than Newton's law (cf.  \cite{Bekenstein2004, Bugg2015,Milgrom2015}). 

\section{Concentration-compactness and existence}

In this section we prove the basic concentration-compactness structure of minimizing sequences for $\E_Z$. While this result could be adapted, for example, from the classical theory of Lions  (see \cite{Lions84} or Lemma~1 in the Appendix of \cite{Lions1987}), or from compactness results for minimizing clusters as in \cite[Chapter 29]{Maggi}, here we use a recent compactness result by Frank and Lieb \cite{FrLi2015} which is particularly well-suited for our purposes.  

We will say that a sequence of sets $E_n\to E$ {\it globally} in $\R^d$ if the measure of the symmetric difference $| E_n\triangle E|\to 0$.  We similarly say that $E_n\to E$
 {\it locally} if for every compact $K\subset\R^d$, $(K\cap E_n)\to (K\cap E)$ globally.  Global convergence is thus equivalent to convergence of the characteristic functions $\chi_{E_n}\to\chi_E$ in $L^1(\R^d)$, while local convergence is merely $L^1_{\loc}$ convergence of the characteristic functions.

\begin{lemma}\label{lem:CC}
Let $Z\in [0,\infty)$ be fixed, and $\{\Om_n\}_{n\in\N}$ a minimizing sequence for $e_Z(M)$.  Then there exists a subsequence such that either

\noindent {\bf (A)} \, there exists a set $E^0$ with $|E^0|=M$ which minimizes $e_Z(M)$, for which 
$\Om_{n}\to E^0$ globally, i.e., $\chi_{\Om_{n}}\to\chi_{E^0}$ in $L^1(\R^d)$ as $n\to\infty$; or

\noindent {\bf (B)} \, there exist $N\in \mathbb{N}$;  $\{x^1_n,\dots,x^N_n\}_{n\in\N}\subset\R^d$, with $|x^i_n|\to\infty$ and sets of finite perimeter $\{F_n^0,\dots,F_n^N, \Om_n^N\}_{n\in\N}$ such that $|x^i_n-x^j_n|\to\infty$, $i\neq j$; with
\beqn\label{eq:anyZ1}
\Om_n = F^0_n \cup \left[ \bigcup_{i=1}^N (F^i_n+x^i_n)\right] \cup \Om^N_n,
\eeqn
a disjoint union with components satisfying the following:
\begin{enumerate}
\item $\Om^N_n\to \emptyset$ and $F^i_n\to E^i$, globally in $\R^d$, with $m^i:=|E^i|> 0$ for all $i=1,\dots,N$ and $|E^0|>0$ for $Z>0$;
\item $\displaystyle 
    M=\sum_{i=0}^N |E^i| = \lim_{n\to\infty} \left(\sum_{i=0}^N |F^i_n| + |\Om^N_n|\right)$;
\item $E^i$ attain the minimimum for $e_0(m^i)$ for each $i=1,\dots,N$;
\item $E^0$ attains the minimum for $e_Z(m^0)$;
\item $\displaystyle e_Z(M) \geq e_Z(m^0) + \sum_{i=1}^N e_0(m^i)$.
\end{enumerate}
\end{lemma}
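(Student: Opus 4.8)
The plan is to read the decomposition off the Frank--Lieb compactness lemma \cite{FrLi2015}, which is designed precisely to forbid a diffuse loss of mass -- or of Riesz energy -- to infinity.

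First I would verify its hypotheses. Since $t\mapsto t^{-p}$ is decreasing, the bathtub principle gives $\int_\Om|x|^{-p}\,dx\leq\int_{B_M}|x|^{-p}\,dx=:C_M<\infty$ whenever $|\Om|=M$, where $B_M$ is the origin-centred ball of volume $M$ and the integral is finite because $p<d$; hence $\E_Z(\Om)\geq -ZC_M$ (so $e_Z(M)>-\infty$), and $\Per(\Om_n)\leq\E_Z(\Om_n)+ZC_M$ stays bounded along a minimizing sequence. Applying the Frank--Lieb lemma to $\{\Om_n\}$ then yields, after a subsequence, at most countably many nontrivial limit sets $\Om^{(1)},\Om^{(2)},\dots$ of finite perimeter and translations $a^{(j)}_n\in\Rd$ with $|a^{(j)}_n-a^{(k)}_n|\to\infty$ for $j\ne k$, such that $\chi_{\Om_n}(\,\cdot\,+a^{(j)}_n)\to\chi_{\Om^{(j)}}$ in $L^1_\loc$ and a.e., with no loss of mass and the lower-semicontinuity bounds
\begin{gather*}
\textstyle\sum_j|\Om^{(j)}|=M,\qquad \sum_j\Per(\Om^{(j)})\leq\liminf_n\Per(\Om_n),\\
\textstyle\sum_j\iint_{\Om^{(j)}}|x-y|^{-s}\,dx\,dy=\lim_n\iint_{\Om_n}|x-y|^{-s}\,dx\,dy.
\end{gather*}
At most one of the sequences $(a^{(j)}_n)_n$ can stay bounded (two bounded ones would contradict mutual divergence), so (the remaining profiles drifting to infinity) $\chi_{\Om_n}\to\chi_{E^0}$ in $L^1_\loc(\Rd)$ for a single set $E^0$ of finite perimeter (possibly empty); I write $m^0:=|E^0|$, and let $E^1,E^2,\dots$, with $m^i:=|E^i|>0$, be the remaining profiles, carried off by divergent translation sequences $x^i_n$.

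The second step is energy bookkeeping. The perimeter and Riesz parts of $\E_Z(\Om_n)$ are bounded below by the two relations above (each used as ``$\geq$ the contribution of $E^0,E^1,\dots$''), while for the potential I split $\int_{\Om_n}|x|^{-p}\,dx=\int_{\Om_n\cap B_R}|x|^{-p}\,dx+\int_{\Om_n\setminus B_R}|x|^{-p}\,dx$ with $B_R=B_R(0)$: the tail is $\leq R^{-p}M$ uniformly in $n$, and since $\chi_{\Om_n}\to\chi_{E^0}$ in $L^1_\loc$, dominated convergence with majorant $|x|^{-p}\chi_{B_R}\in L^1$ gives $\int_{\Om_n\cap B_R}|x|^{-p}\to\int_{E^0\cap B_R}|x|^{-p}$, whence (letting $R\to\infty$) $\int_{\Om_n}|x|^{-p}\to\int_{E^0}|x|^{-p}$. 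Combining these,
\[
e_Z(M)=\lim_n\E_Z(\Om_n)\ \geq\ \E_Z(E^0)+\sum_{i\geq1}\E_0(E^i)\ \geq\ e_Z(m^0)+\sum_{i\geq1}e_0(m^i),
\]
which is already conclusion (5). For the reverse inequality I place a near-minimizer of $e_Z(m^0)$ at the origin and near-minimizers of the $e_0(m^i)$ at mutually distant centres tending to infinity: the cross Riesz terms and the (negative) potential contributions of the far pieces are $o(1)$, and $m^0+\sum_{i\geq1}m^i=M$ by the no-loss of mass, so $e_Z(M)\leq e_Z(m^0)+\sum_{i\geq1}e_0(m^i)$. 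Hence every inequality above is an equality, forcing $\E_Z(E^0)=e_Z(m^0)$ and $\E_0(E^i)=e_0(m^i)$ for all $i$ -- conclusions (3)--(4). A standard argument using strict subadditivity of $e_0$ on small masses now caps the number of profiles: no two of them can have combined mass below the critical threshold $m_c$ (else merging them strictly lowers $\sum e_0(m^i)$, contradicting the equality just obtained), so at most one profile has small mass and $N+1\leq1+\lceil 2M/m_c\rceil$. Relabelling the finitely many profiles as $E^0,E^1,\dots,E^N$ with centres $x^0_n=0,x^1_n,\dots,x^N_n$, taking $F^i_n$ to be the corresponding nearly-concentrated pieces of $\Om_n$ and $\Om^N_n:=\Om_n\setminus\bigl(F^0_n\cup\bigcup_{i=1}^N(F^i_n+x^i_n)\bigr)$, the no-loss of mass upgrades the local convergence to global: $F^i_n\to E^i$ globally and $|\Om^N_n|\to0$ -- conclusions (1)--(2).

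Finally one sorts the two alternatives, and here -- beyond invoking Frank--Lieb -- lies the only real difficulty, the no-loss phenomenon itself: that a minimizing sequence can neither shed a diffuse amount of mass nor leak Riesz energy to infinity; everything after it is bookkeeping. If $N=0$ then $|E^0|=M$, $\Om_n\to E^0$ globally, and the bookkeeping inequality forces $\E_Z(E^0)=e_Z(M)$, so $E^0$ minimizes $e_Z(M)$: this is case (A). If $N\geq1$ we are in case (B), and the only remaining point is that $m^0>0$ when $Z>0$: if $E^0=\emptyset$ then $e_Z(M)=\sum_{i\geq1}e_0(m^i)$, but relocating $E^1$ to the origin gives a competitor of energy $\E_0(E^1)-Z\int_{E^1}|x|^{-p}\,dx+o(1)<e_Z(M)$ (strict, since $\int_{E^1}|x|^{-p}\,dx>0$) -- a contradiction.
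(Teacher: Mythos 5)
Your proposal is correct and follows essentially the same route as the paper: the Frank--Lieb compactness/profile decomposition (which the paper assembles by iterating their Lemmas 2.1--2.3 rather than quoting it packaged), the confinement term handled by a tail estimate plus dominated convergence, matching the resulting lower bound against a subadditivity upper bound to force minimality of each profile, concavity of $e_0$ at small mass to bound the number of profiles, and a translation argument to show $|E^0|>0$ when $Z>0$. One small slip in that last step: the admissible competitor must be the entire translated sequence $\Om_n-x^1_n$ (which has mass $M$ and energy $\E_Z(\Om_n)-Z\int_{E^1}|x|^{-p}\,dx+o(1)<e_Z(M)$ for large $n$), not the single profile $E^1$ relocated to the origin, whose mass is only $m^1$.
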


As mentioned in the introduction (see \cite[Definition 4.3]{KnMuNo2016},) the collection of sets $\{E^0,\dots,E^N\}_{n\in\N}$ are referred to as a \emph{generalized minimizer} of $\E_Z$ for any $Z\geq 0$.   
 Kn\"upfer, Muratov, and Novaga prove the existence of generalized minimizers for the case $Z=0$ by considering a truncation of the energy $\E_0$ and by obtaining density bounds for minimizers of the truncated energy (cf. \cite[Theorem 4.5]{KnMuNo2016}). Our approach in proving Lemma~\ref{lem:CC}  is more direct, and provides qualitative information about the structure of minimizing sequences that we exploit in Theorem \ref{thm:Z20}.  In particular, Corollary~\ref{thm:gen_min} follows, since $F^i_n\to E^i$ and \eqref{eq:anyZ1} then imply
$$  \lim_{n\to\infty} \left|  \Om_n \, \triangle  
  \left( E^0 \cup \bigcup_{i=1}^N (E^i+x^i_n)\right)\right| = 0.
$$


Before going back to Lemma \ref{lem:CC}, we need the following result to conveniently deal with the confinement term.

\begin{lemma}\label{lem:confinement}
Assume $A_n$ is a sequence of measurable sets with $|A_n|=M$ and $A_n\to 0$ locally (that is, $|A_n\cap K|\to 0$ for any compact $K$.)  Then,
$$  \lim_{n\to\infty} \int_{A_n} {1\over |x|^p}\, dx =0.  $$
\end{lemma}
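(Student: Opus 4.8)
The plan is to split the integral into a bounded region around the origin, where the kernel $|x|^{-p}$ is singular but integrable, and its complement, where the kernel is bounded but $A_n$ may still carry large measure. Fix $\e>0$. Since $p<s<d$ we have $p<d$, hence $|x|^{-p}\in L^1(B_R(0))$ for every $R>0$. On the outer region, $|x|>R$ forces $|x|^{-p}<R^{-p}$, so
\[
  \int_{A_n\setminus B_R(0)} \frac{dx}{|x|^p} \leq R^{-p}\,|A_n\setminus B_R(0)| \leq R^{-p}\,|A_n| = R^{-p}M .
\]
First I would choose $R=R(\e)$ large enough that $R^{-p}M<\e/2$; note this step uses only the mass constraint $|A_n|=M$ and is uniform in $n$.

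For the inner region I would use absolute continuity of the integral: since $\int_{B_R(0)}|x|^{-p}\,dx<\infty$, there is $\delta>0$ such that $\int_E |x|^{-p}\,dx<\e/2$ whenever $E\subset B_R(0)$ and $|E|<\delta$. (If a quantitative rate is preferred, one may instead pick $q\in(1,d/p)$ so that $|x|^{-p}\in L^q(B_R(0))$ and estimate $\int_{A_n\cap B_R(0)}|x|^{-p}\,dx\leq \| |x|^{-p} \|_{L^q(B_R(0))}\,|A_n\cap B_R(0)|^{1-1/q}$.) By the hypothesis that $A_n\to 0$ locally we have $|A_n\cap B_R(0)|\to 0$, so for all $n$ large, $|A_n\cap B_R(0)|<\delta$ and hence $\int_{A_n\cap B_R(0)}|x|^{-p}\,dx<\e/2$. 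Adding the two bounds gives $\int_{A_n}|x|^{-p}\,dx<\e$ for all sufficiently large $n$, which is the assertion.

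There is no genuine obstacle here; the only point worth flagging is that the two pieces are controlled by genuinely different mechanisms — the singularity at the origin by the integrability exponent $p<d$ together with the local vanishing of $A_n$, and the tail by the slow decay of the kernel together with the fixed total measure $M$ — and that the order of quantifiers matters: the radius $R$ must be fixed first (depending only on $\e$ and $M$), and only afterwards is the limit $n\to\infty$ taken.
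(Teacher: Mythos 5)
Your proof is correct and matches the paper's (one-line) indication: the lateral split at radius $R$ with the bound $R^{-p}M$ on the tail, together with absolute continuity of $\int_E |x|^{-p}\,dx$ on $B_R(0)$ (which is what the paper's ``vertical truncation'' accomplishes), is exactly the intended argument. The quantifier order you flag --- fix $R=R(\e)$ first, then let $n\to\infty$ --- is indeed the only point requiring care.
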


The proof is an elementary exercise in real analysis, obtained by truncating $|x|^{-p}$ both vertically and laterally.

We also require the following subadditivity condition, which follows from the same arguments as Lemma~4 of \cite{LO2}:  for any values $0<m'<m$, and any $Z\geq 0$,
\beqn\label{eq:subadd}
e_Z(m)\leq e_Z(m')+e_0(m-m').
\eeqn

\bigskip

\begin{proof}[Proof of Lemma \ref{lem:CC}]  Let $Z\geq 0$ be fixed and $\Om_n$ a minimizing sequence for $e_Z(M)$.
We prove this lemma in several step.

\smallskip

\noindent {\bf Step 1:} \  {\sl Passing to the limit directly.} \
 By the compact embedding of $BV(\R^d)$ in $L^1_{\loc}(\R^d)$ (see e.g. \cite[Corollary 12.27]{Maggi}) there exists a subsequence and a set of finite perimeter $E^0\subset\R^d$ so that $\Om_n\to E^0$ locally, that is, $\chi_{\Om_n}\to \chi_{E^0}$ in $L^1_{\loc}(\R^d)$. (At this point, we admit the possibility that $|E^0|=0$, but in fact we will see in Step~3 that $|E^0|>0$.)
 
We claim that if the limit set $|E^0|=M$, then case {\bf (A)} holds and we are done. Indeed, since $\{\Om_n\}_{n\in\N}$ is locally convergent, a subsequence converges almost everywhere in $\R^d$.  In addition, the measures of the sets converge, that is, $|\Om_n|=M=|E^0|$, so by the Brezis-Lieb Lemma \cite[Theorem 1.9]{LiebLoss97} we may then conclude that (along a subsequence) $\Om_n\to E^0$ globally.  By the lower semicontinuity of the perimeter (see \cite[Proposition 4.29]{Maggi}) we have
	\[
	  \Per E^0 \leq \liminf_{n\to\infty}\Per \Om_n
	 \]
On the other hand, \cite[Lemma 2.3]{FrLi2015} implies that the nonlocal part is lower semicontinuous, as well, that is,
	\[
		\mathcal{D}(E^0,E^0) \leq \liminf_{n\to\infty} \mathcal{D}(\Om_n,\Om_n) \qquad \text{where} \qquad
		\mathcal{D}(E,F) := \int_{E} \! \int_{F} \frac{dxdy}{|x-y|^s}.
	\]
To pass to the limit in the confinement term, we apply Lemma~\ref{lem:confinement} to the sequence $(\Om_n\setminus E^0)\to \emptyset$ locally, and together with the above we have
	\[
		\E_{Z}(E^0) \leq \liminf_{n\to\infty} \E_{Z}(\Omega_n).
	\]
Therefore we conclude that $E^0$ attains the minimum value of $\E_Z$, and the proof is complete for case {\bf (A)}.  

In the following we may thus assume that $m^0:=|E^0|<M$.

\medskip

\noindent {\bf Step 2:} \  {\sl Concentration-compactness.} \
In case $m^0=|E^0|<M$, by \cite[Lemma 2.2]{FrLi2015} (with no translation necessary, i.e., $x_n^0=0$) there exist radii $r^0_n\in (0,\infty)$ such that for
	\[
		   F^0_n = \Omega_n\cap B_{r^0_n}(0) \qquad G^0_n = \Omega_n\setminus \ol{B_{r^0_n}}(0)
	\]
where $F^0_n\to E^0$ globally, $G^0_n\to \emptyset$ locally as $n\to\infty$ with $m^0_n:=|F^0_n|\to |E^0|=m^0<M$,
and 
	\beqn\label{eq:split1}
		\lim_{n\to\infty} \big(\Per(\Omega_n) -\Per(F^0_n) - \Per(G^0_n)\big)=0, \qquad \liminf_{n\to\infty} \Per(F^0_n) \geq \Per(E^0).
	\eeqn
In addition, again by \cite[Lemma 2.3]{FrLi2015},
	\beqn \label{eq:split2}
		\mathcal{D}(\Omega_n,\Omega_n) 
		  = \mathcal{D}(F^0_n,F^0_n) + \mathcal{D}(G^0_n,G^0_n) + o(1) 
		  = \mathcal{D}(E^0,E^0) + \mathcal{D}(G^0_n,G^0_n) + o(1).
	\eeqn
Finally, by Lemma~\ref{lem:confinement}, the confinement term is absent for $G^0_n$, which tends to zero locally.
In conclusion, we have a splitting of the energy, 
	\beqn \label{eq:split3}
		\E_{Z}(\Omega_n) = \E_{Z}(E^0) + \E_{0}(G^0_n) + o(1).  
	\eeqn

We define $\Omega^0_n := G^0_n$, with $|\Omega^0_n|=M-m^0_n=M-m^0+o(1)>0$, and begin an iterative process of locating escaping concentrations of mass, as in the concentration-compactness lemma of Lions (cf. \cite{Lions84}).   
By \cite[Proposition 2.1]{FrLi2015}, there is a set $E^1$ of positive measure and a sequence of points $x^1_n\in\Rd$ for which $\Omega^0_n - x^1_n\to E^1$ locally.
Since $\Om_n^0\to 0$ locally, it follows that $|x_n^1|\to\infty$.
  In addition, $|E^1|\in (0,M-m^0]$ and $\Per(E^1)\leq \liminf_{n\to\infty}\Per(\Omega^0_n)$.  In case of nonuniqueness of such translates, we define 
	\begin{multline}
	  \mu(\{\Omega^0_n\}):= \sup\big\{ |A| \colon \text{there exist } A\subset\Rd \text{ and } \\ \{\xi_n\}\subset\Rd \text{ such that } \Omega^0_n-\xi_n \to A \text{ locally}\big\}. \nonumber
	\end{multline}
We may thus choose $\{x^1_n\}$ and $E^1$ such that $|E^1|>\frac12\mu(\{\Omega^0_n\})$.

Applying \cite[Lemma 2.2]{FrLi2015} as above, there exist radii $r^1_n\in (0,\infty)$ such that if we define
	\[
		   F^1_n = (\Omega^0_n-x_n^1)\cap B_{r^1_n}(0) \qquad G^1_n = (\Omega^0_n-x_n^1)\setminus \ol{B_{r^1_n}}(0)
	\]
then $F^1_n\to E^1$ globally, $G^1_n\to \emptyset$ locally as $n\to\infty$, with $m^1_n:=|F^1_n|\to |E^1|=:m^1\in (0,M-m^0]$,
	\begin{multline} \label{eq:split4}
			0=\lim_{n\to\infty} \big(\Per(\Omega^0_n) -\Per(F^1_n) - \Per(G^1_n)\big) \\
				=	\lim_{n\to\infty}\big(\Per(\Omega_n) -\Per(F^0_n) -\Per(F^1_n) - \Per(G^1_n)\big),
	\end{multline}
and $\liminf_{n\to\infty} \Per(F^1_n) \geq \Per(E^1)$.
Finally, by \cite[Lemma 2.3]{FrLi2015},
\begin{align}\nonumber
\mathcal{D}(\Omega_n, \Omega_n) 
&= \mathcal{D}(F^0_n,F^0_n)+\mathcal{D}(\Omega^0_n,\Omega^0_n) \\
\nonumber
&=\mathcal{D}(F^0_n,F^0_n)+ \mathcal{D}(F^1_n,F^1_n) + \mathcal{D}(G^1_n,G^1_n) + o(1) \\
&=\mathcal{D}(E^0,E^0)+ \mathcal{D}(E^1,E^1) + \mathcal{D}(G^1_n,G^1_n) + o(1).
\label{eq:split5}
\end{align}
In particular, 
\begin{equation}\label{eq:split6}
  \E_{Z}(\Omega_n) \geq \E_{Z}(E^0) +\E_{0}(E^1) + \E_{0}(G^1_n) + o(1).  
\end{equation} 

If $|G^1_n|\to 0$, the process terminates with $N=1$.  If not, we let $\Omega^1_n:=G^1_n+x_n^1$, and repeat the above procedure with $\mu(\{\Omega^1_n\})\in (0,M-m^0-m^1]$, iteratively generating an at most countable collection of concentration sets $F^i_n\to E^i$ and remainder sets $\Omega_n^i$, $i=1,2,\ldots$, satisfying 
\begin{gather}
\Omega_n^{i-1}=[F^i_n+ x^i_n]\cup \Omega_n^i, \quad \text{a disjoint union,} \\
\label{eq:cc0}
\Om_n =\Omega^k_n \cup \left[\bigcup_{i=0}^k (F^i_n + x^i_n)\right], \qquad |x_n^i-x_n^j|\to\infty, \ i\neq j,\\
\label{eq:cc1}
M = \sum_{i=0}^k m_n^i + \lim_{n\to\infty} |\Omega^k_n|=\sum_{i=0}^k m^i + \lim_{n\to\infty} |G^k_n|, \\
\label{eq:cc2}
m^k\geq\frac12 \mu(\{\Omega^{k-1}_n\}), \\
\label{eq:cc3}
\E_{Z}(\Omega_n) \geq \E_{Z}(\Omega^k_n)+\E_Z(E^0)+\sum_{i=1}^k \E_0(E^i) + o(1)
\end{gather}
for $k\in\N$.  We note that the decomposition in \eqref{eq:cc0} is disjoint, with $\Om^k_n\to \emptyset$ locally.

In case $|\Omega^N_n|\to 0$ for some finite $N\in\N$, the process terminates and the decomposition is finite.  If the number of components $E^i$ is countable, then by \eqref{eq:cc1} we must have $m^i\to 0$ as $i\to\infty$, and hence $\mu(\{\Omega^k_n\})\to 0$ as $k\to\infty$, by \eqref{eq:cc2}.  We may then conclude that the iteration exhausts all of the mass, and
	\beqn \label{eq:samemass}
		M= \sum_{i=0}^\infty m^i.  
	\eeqn

\medskip

\noindent {\bf Step 3:} \ {\sl If $Z>0$, then $|E^0|\neq 0$.} \  Suppose the contrary, i.e., that $|E^0|=0$.  Define $\tilde\Om_n:=\Om_n- x_n^1$, and so by the above construction $\tilde\Om_n\to E^1$ and $\tilde\Om_n\setminus F^1_n=G_n^1\to \emptyset$ locally.  Thus, by Lemma~\ref{lem:confinement}, for any $i\neq 1$,
$$  \lim_{n\to\infty} \int_{\tilde\Om_n\setminus F^1_n} {1\over |x|^p}dx =0.  $$
Since the perimeter and nonlocal terms in $\E_Z$ are translation invariant, we arrive at
$$  \E_Z(\tilde\Om_n) - \E_Z(\Om_n) = -Z \int_{F_n^1} {1\over |x|^p}dx + o(1) 
=-Z \int_{E^1} {1\over |x|^p}dx +o(1) <0,
$$
a contradiction.

\medskip

\noindent {\bf Step 4:} \ {\sl The sets $E^i$ are minimal, and there are finitely many.} \
 
By Lemma~\ref{lem:confinement},
$$  \liminf_{n\to\infty}
   \E_Z(\Om_n^k) = \liminf_{n\to\infty} \left[\E_0(\Om_n^k) - Z\int_{\Om_n^k}  |x|^{-p}\, dx
   \right]
   =  \liminf_{n\to\infty} \E_0(\Om_n^k)\geq 0.  $$
Thus, as \eqref{eq:cc3} holds for all $k\in\N$, we have:
we then have
$$  \E_{Z}(\Omega_n) \geq \E_Z(E^0)+\sum_{i=1}^\infty \E_0 (E^i) -o(1). $$
We may then conclude,
	\begin{align*}
		e_Z(M)+o(1)\geq \E_{Z}(\Omega_n) &\geq \E_Z(E^0)+\sum_{i=1}^\infty \E_0 (E^i) 
				-o(1) \\
		 &\geq
         e_Z(m^0)+ \sum_{i=1}^\infty e_0(m^i)-o(1)  \\
         &\geq e_Z(M)-o(1),
    \end{align*}
by the subadditivity condition \eqref{eq:subadd} of $e_Z$.  Matching the upper and lower bounds we have,
$$   (\E_Z(E^0)-e_Z(m^0))+ \sum_{i=1}^\infty [\E_0(E^i)-e_0(m^i)] \leq 0.  $$
Since each term is nonnegative, each must be zero, and so each set $E^i$, $i=0,1,\dots$ is minimal.

Lastly, as the series converges we must have $e_0(m^i)\to 0$ as $i\to\infty$, and from this fact we may conclude that only finitely many of $m^i$ are nonzero. This follows almost verbatim as in \cite[Lemma 4.4]{ChPe2010}, so we sketch the main idea here for completeness. Now let $m^{**}>0$ be the constant such that $e_0$ is attained uniquely by a ball of volume $m$ for $m\leq m^{**}$ (cf. \cite[Theorem 1.3]{FiFuMaMiMo2015}). For the ball, the value $e_0(m)=C_1 m^{(d-1)/d} + C_2 m^{(2d-s)/d}$ is explicitly known (with universal constants $C_1,\, C_2$), and is strictly concave when $m<\trns{m}:=\min\{m^{**},(C_1/C_2)^{d/(1+d-s)}\}$.  In particular, it follows that if the masses $\{m^i\}$ minimize the expression $e_0(M-m^0)=\sum_{i=1}^\infty e_0(m^i)$ then there can be at most one $m^i\in (0,\trns{m})$. Hence, there can only be a finite number of components $E^i$.

This completes the proof of the concentration lemma. 
\end{proof}

\bigskip

The proof of Theorem~\ref{thm:anyZ} is essentially given in \cite{AlBrChTo2017_1} for the Newtonian case $s=1$ and for more general confinement terms, but we include a short proof here for completeness.

\medskip

\begin{proof}[Proof of Theorem \ref{thm:anyZ}]  We apply Lemma~\ref{lem:CC} to any  minimizing sequence $\{\Om_n\}$ for $e_Z(M)$.  If case {\bf (A)} holds, the sequence converges to a minimizer and we are done.  So assume there is splitting as in case {\bf (B)}, and so there exists $N\in\N$, sets $E^i\subset\R^d$ with $|E^i|=m^i\neq 0$ for each $i=0,1,\dots,N$, $M=\sum_{i=0}^N m^i$, satisfying the lower bound,
\beqn\label{lowbd}
e_Z(M) \geq e_Z(m^0) + \sum_{i=1}^N e_0(m^i)
\eeqn

We now construct a better upper bound, using the slow decay rate of the potential (recall that $0<p<s$).  As each $E^i$ is a minimizer, it is essentially bounded (cf. \cite[Lemma 4.1]{KnMu2014}). Hence, we may choose a representative for $E^i$ such that, for some $R>0$, we have $E^i \subset B_R(0)$ for all $i=0,1,\ldots,N$.  For $i=1,\ldots,N$ let $b_i\in\Rd$ such that $|b_i|=1$, and let $b_0=0$. Define
	\[
		\Om_t := E^0\cup\left[\bigcup_{i=1}^N (E^i + tb_i)\right].
	\]
Note that for all sufficiently large $t$ the sets are disjoint, and so using the translation invariance of the perimeter and the nonlocal part $\mathcal{D}$, we have
\begin{align}\label{upbd}
	e_Z(M)&\leq \E_Z(\Om_t) =  \E_Z(E^0) + \sum_{i=1}^N \E_0(E^i) +\FF(t)-\GG(t) \\
	 \nonumber
	 &= e_Z(m^0) + 	\sum_{i=1}^N e_0(m^i) +\FF(t)-\GG(t),
\end{align}
where
$$  \FF(t):= \sum_{\substack{i,j=0\\ i\neq j}}^N \int_{E^i+tb_i}\int_{E^j+tb_j} \frac{dxdy}{|x-y|^s} \ \ \text{and} 
   \ \ \GG(t):=\sum_{i=1}^N\int_{E^i+tb_i} \frac{dx}{|x|^p}.  $$

We now estimate each; first, we claim there is a $t_0>0$ for which $\FF(t)\leq Ct^{-s}$ for all $t>t_0$.  Indeed, for any $i\neq j$, with the change of variables $t\xi=x$, $t\eta=y$, we have
\begin{align*}
t^s\int_{E^i+tb_i}\int_{E^j+tb_j} \frac{dxdy}{|x-y|^s}
&\leq t^s \int_{B_R(tb_i)} \int_{B_R(tb_j)} \frac{dxdy}{|x-y|^s} \\
& = {|B_R|^2 \over |B_{R/t}|^2}\int_{B_{R/t}(b_i)} \int_{B_{R/t}(b_j)} 
\frac{d\xi d\eta}{|\xi-\eta|^s} \longrightarrow  {|B_R|^2\over |b_i-b_j|^s},
\end{align*}
as $t\to\infty$.  There are only finitely many terms in $\FF(t)$, and so the claim holds.

To estimate $\GG(t)$ from below, we note that as $t\to\infty$,
$$  t^{-p}\int_{E^i+tb_i} |x|^{-p} \, dx = \int_{E^i} \left|b_i+{x\over t}\right|^{-p}\, dx
     \longrightarrow |E^i|=m^i,
$$
by dominated convergence.  Thus, $\FF(t)-\GG(t)\leq c_1 t^{-s} - MZ t^{-p}<0$ for sufficiently large $t$, and thus \eqref{upbd} is in contradiction with \eqref{lowbd}.
Thus we must have $|\Om^0|=M$ and $e_Z(M)=\E_Z(E^0)$, for any $M>0$ and for any $Z>0$. 
\end{proof}

\section{The limit $Z\to 0$}

We start this section by proving that the finite dimensional energy functional $\FNm$ given by \eqref{eq:interactN} has a minimizer.
We define 
$$\mu_{N,\underline{m}}:=\inf_{\Sigma_N} \FNm,  $$ 
where the admissible set $\Sigma_N$ is defined in \eqref{eqn:admiss}.

\begin{proposition}\label{prop:finitedim}
For any $N\in \N$ and $\underline m$, the functional $\FNm$ attains its minimum
$\mu_{N,\underline{m}}<0$ on the admissible class $\Sigma_N$.
\end{proposition}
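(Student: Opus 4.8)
The plan is to establish existence of a minimizer for $\FNm$ by the direct method, with the key difficulty being that $\Sigma_N$ is unbounded and $\FNm$ is not coercive, so I must rule out loss of compactness by showing that along a minimizing sequence no point escapes to infinity and no two points collide. First I would record that $\mu_{N,\underline m}<0$: this is immediate by evaluating $\FNm$ at a test configuration $y_i = t b_i$ with $|b_i|=1$, $b_i\neq b_j$, where the repulsive sum scales like $t^{-s}$ and the attractive sum like $t^{-p}$ with $p<s$, so for large $t$ the attractive term dominates and $\FNm<0$ --- the same computation already carried out in the proof of Theorem~\ref{thm:anyZ}.

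Next I would take a minimizing sequence $(y_0^k,\dots,y_N^k)\in\Sigma_N$ (so $y_0^k=0$) with $\FNm(y_0^k,\dots,y_N^k)\to\mu_{N,\underline m}<0$. The strategy is an induction on $N$, or equivalently a careful case analysis extracting a subsequence along which each $|y_i^k|$ either stays bounded or diverges, and each $|y_i^k-y_j^k|$ either stays bounded or diverges. If all pairwise distances $|y_i^k-y_j^k|$ (and all $|y_i^k|$, which are among these since $y_0^k=0$) remain bounded, then after passing to a subsequence $y_i^k\to y_i$, and the only obstruction to lower semicontinuity is a collision $y_i=y_j$ for some $i\neq j$; but then the repulsive term ${m^im^j}/{|y_i^k-y_j^k|^s}\to+\infty$ while the attractive term is bounded above (it is always $\leq 0$ in its contribution to making the energy small, and in any case bounded on bounded sets away from... wait, the attractive term $-m^i/|y_i^k|^p$ blows down to $-\infty$ only if $y_i^k\to 0$, but $y_i^k\to y_j^k$ with $i\neq j$ and $i,j$ possibly both nonzero does not force $y_i^k\to0$). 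So I must be slightly careful: a collision makes $\FNm\to+\infty$ unless simultaneously some $y_i^k\to 0$; but $y_i^k\to 0$ means $y_i^k\to y_0^k=0$, which is itself a collision of the pair $(0,i)$, again forcing the repulsive term $m^0m^i/|y_i^k|^s\to+\infty$, and since $s>p$ this dominates $m^i/|y_i^k|^p$. Hence in all cases a collision forces $\FNm\to+\infty$, contradicting that the sequence is minimizing (recall $\mu_{N,\underline m}<0<\infty$). Therefore no collisions occur and the limit $(0,y_1,\dots,y_N)\in\Sigma_N$ is attained with $\FNm$ continuous at it, giving a genuine minimizer.

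It remains to exclude escape to infinity. The clean way is a splitting/subadditivity argument: define $\mu_{N,\underline m}$ as above and show that if some points diverge, the configuration decouples into a cluster near the origin (the points $i$ with $|y_i^k|$ bounded, including $i=0$) carrying masses summing to some $M'$ and a far-away cluster carrying the remaining mass, whose mutual repulsion and whose attraction to the origin both vanish in the limit (the cross repulsion terms $m^im^j/|y_i^k-y_j^k|^s\to 0$, and the attraction terms $-m^i/|y_i^k|^p\to 0$ for the escaping $i$). This would give $\liminf_k \FNm(y^k) \geq \mu_{N',\underline m'}$ where $N'<N$ and $\underline m'$ is the sub-tuple of retained masses including $m^0$; combined with $\mu_{N',\underline m'}\geq \mu_{N,\underline m}$ (which follows by sending the dropped points to infinity in an optimal $N'$-configuration, making their contributions negligible) and strict negativity $\mu_{N,\underline m}<0$, I would still need to promote this to a contradiction --- and the point is that the far cluster, if it retains positive mass, could in principle lower the energy by its own internal attraction, but there is no origin in the far cluster for it to be attracted to, so internally its best is the repulsive sum which is $\geq 0$; formally, I expect to show $\mu_{N,\underline m} = \mu_{N',\underline m'}$ is impossible because one can always do strictly better by keeping the escaping mass at finite distance from the origin. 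Concretely: take any near-optimal $N'$-configuration of the retained points, then place the escaping points at $\pm t b$ for large $t$; the extra repulsion is $O(t^{-s})$ and the extra gain from attraction is $\geq c\, t^{-p}$, so $\FNm$ strictly decreases, showing $\mu_{N,\underline m}<\mu_{N',\underline m'}$, contradicting $\mu_{N,\underline m}\geq \mu_{N',\underline m'}$. Hence no escape, and the minimizing sequence is precompact in $\Sigma_N$.

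The main obstacle I anticipate is the bookkeeping in the escape-to-infinity case: handling the possibility that several points escape together as a tight sub-cluster (so that their mutual distances stay bounded while their distance to the origin diverges), which requires organizing the escaping points into maximal bounded-diameter groups and treating each group as a single effective mass at infinity. Once the groups are identified, the estimates are the routine $t^{-s}$ versus $t^{-p}$ comparison already used for Theorem~\ref{thm:anyZ}, but the combinatorial setup of the induction hypothesis (over both $N$ and the mass tuple) needs to be stated carefully so that the strict inequality $\mu_{N,\underline m}<\mu_{N',\underline m'}$ closes the argument.
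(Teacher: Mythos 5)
Your proposal is correct and follows essentially the same route as the paper: a dichotomy between bounded and escaping points, a lower bound obtained by discarding the nonnegative internal repulsion of the escaping cluster (so your worry about organizing escapers into tight sub-groups is actually a non-issue), and a competitor that relocates the escaping mass to a large sphere of radius $t$ to gain $O(t^{-p})$ in attraction against only $O(t^{-s})$ in repulsion. Your explicit treatment of collisions in the bounded case (a collision forces the $|y_i-y_j|^{-s}$ term to $+\infty$, and a collision with the origin is dominated by repulsion since $s>p$) is a point the paper's proof leaves implicit, and is a welcome addition.
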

\begin{proof}
Consider any minimizing sequence $\{x_n^i\}_{n\in\N}$, $i=1,\ldots,N$, in $\Sigma_N$, that is, 
$\mu_{N,\underline{m}}=\lim_{n\to\infty} \FNm(0,x^n_1,\dots,x^n_N)$.  If all the sequences $\{x_n^i\}_{n\in\N}$, $i=1,\dots,N$, remain bounded, then we obtain convergence to a minimizer along some subsequence.  So instead, assume that there is an integer $k\in \{0,1,\dots,N-1\}$ and a subsequence (not relabelled) so that 
	\beqn \label{eq:split}
			\left\{ \begin{aligned}
					x_n^i \underset{n\to\infty}{\longrightarrow} a_i, \quad \forall \ i=0,\dots,k, \ \text{but} \\
					|x^i_n| \underset{n\to\infty}{\longrightarrow} \infty, \quad \forall \ i=(k+1),\dots,N.
						\end{aligned} \right.
	\eeqn
We first treat the case where $k\geq 1$.  Decompose $\FNm$ into pieces,
	\begin{multline} \label{eq:decomp}
  \FNm(0,x^1_n,\dots,x^N_n)  = \mathbf{F}_{k,(m^0,\dots,m^k)}(0,x^1_n,\dots,x^k_n) \\ + 
    \mathbf{F}_{N-k,(m^{k+1},\dots,m^N)}(x^{k+1}_n,\dots,x^N_n) + I_{k,N}, 
	\end{multline}
with interaction term between the two families,
	\[
		  I_{k,N} = \sum_{i=0}^k \sum_{j=k+1}^N {m^i m^j\over |x_n^i - x_n^j|^s}.
	\]
Using the splitting \eqref{eq:split}, we have 
	\beqn \label{eq:tocontradict}
	\begin{aligned} 
		\mu_{N,\underline{m}} &\geq \liminf_{n\to\infty} \left[\mathbf{F}_{k,(m^0,\dots,m^k)}(0,x^1_n,\ldots, x^k_n) + \sum_{i,j=k+1\atop j\neq i}^N {m^i m^j\over |x_n^i - x_n^j|^s}\right]  \\
		&\geq \liminf_{n\to\infty} \mathbf{F}_{k,(m^0,\dots,m^k)}(0,x^1_n,\dots, x^k_n)  \\
		&= \mathbf{F}_{k,(m^0,\dots,m^k)}(0,a_1,\dots, a_k).
	\end{aligned}
	\eeqn

To obtain a contradiction to \eqref{eq:tocontradict}, we define a new configuration given by the points
$\{a_1,\dots,a_k, Ry_1,\dots, Ry_{N-k}\}$ with $\{y_1,\dots,y_{N-k}\}$ distinct points on the unit sphere $|y_j|=1$, and $R>0$ to be determined.  By the same decomposition as in \eqref{eq:decomp},
	\begin{multline}\label{eq:ydecomp}
  		\FNm(0,a_1,\dots,a_k, Ry_1,\dots, Ry_{N-k}) = 
  			\mathbf{F}_{k,(m^0,\dots,m^k)}(0,a_1,\dots,a_k) \\ + 
    		\mathbf{F}_{N-k,(m^{k+1},\dots,m^N)}(Ry_1,\dots, Ry_{N-k}) + \tilde I_{k,N},
	\end{multline}
with $\tilde I_{k,N}$ representing the interaction terms.  If $|a_i|<R_0$ for some $R_0>0$ and for each $i=1,\dots,k$, and if $R>2R_0$, the interaction terms may be estimated by
	\[
	   \tilde I_{k,N} \leq C_1(k,N,\underline{m}) R^{-s}.
	\]
Similarly, since $|Ry_i-Ry_j|\geq C_2 R$, $i\neq j$, for some constant $C_2>0$, we also have
	\[
	  \mathbf{F}_{N-k,(m^{k+1},\dots,m^N)}(Ry_1,\dots, Ry_{N-k})\leq \sum_{i,j=1\atop i\neq j}^{N-k} {m^{k+i}m^{k+j}\over |Ry_i-Ry_j|^s} 
     \leq C_3(k,N,\underline{m}) R^{-s}.
    \]
On the other hand, 
	\[
		 \sum_{i=1}^{N-k} m^{k+i}|Ry_i|^{-p} = R^{-p}\sum_{i=1}^{N-k} m^{k+i} 
   \geq C_4(k,N,\underline{m})R^{-p}.
	\]
and thus \eqref{eq:ydecomp} yields,
	\beqn \label{eq:kge1}
	\begin{aligned}
 		\mu_{N,\underline{m}} &\leq \FNm(0,a_1,\dots,a_k, Ry_1,\dots, Ry_{N-k})\\
 					& \leq \mathbf{F}_{k,(m^0,\dots,m^k)}(0,a_1,\dots,a_k) - C_4(k,N,\underline{m})R^{-p} + O(R^{-s}) \\
     				&< \mathbf{F}_{k,(m^0,\dots,m^k)}(0,a_1,\dots,a_k),
	\end{aligned}
	\eeqn
for $R>R_0>0$ chosen large enough, contradicting \eqref{eq:tocontradict} in case $k\geq 1$.
For $k=0$, that is, if $|x^i_n|\to\infty$ for each $i=1,\dots,N$, we note that
$$  \mu_{N,\underline{m}} \geq \liminf_{n\to\infty} \sum_{i,j=0\atop j\neq i}^N {m^i m^j\over |x_n^i - x_n^j|^s}\geq 0, $$
while the same construction which produced \eqref{eq:kge1} yields the contradictory estimate $\mu_{N,\underline{m}}<0$.
   In conclusion, the entire minimizing sequence must remain bounded, and so the minimum is attained. 
\end{proof}

Next we show that the infimum of the regularized energies $\E_Z$ converge to the infimum of $\E_0$.

\begin{lemma}\label{lem:min}
$\lim_{Z\to 0} e_Z(M) = e_0(M)$.
\end{lemma}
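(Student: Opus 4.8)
The plan is to establish the two inequalities $\limsup_{Z\to 0} e_Z(M) \leq e_0(M)$ and $\liminf_{Z\to 0} e_Z(M) \geq e_0(M)$ separately. The upper bound is the easier direction: for any $\e>0$ pick a competitor $\Om$ with $|\Om|=M$ and $\E_0(\Om)\leq e_0(M)+\e$ (if $e_0(M)$ is attained we may take a minimizer; in general an almost-minimizer suffices, and we may take $\Om$ bounded by truncation as in Lemma~\ref{lem:CC}). Then $\E_Z(\Om) = \E_0(\Om) - Z\int_\Om |x|^{-p}\,dx \leq \E_0(\Om) \leq e_0(M)+\e$, so $e_Z(M)\leq e_0(M)+\e$ for \emph{every} $Z>0$. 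Letting $Z\to 0$ and then $\e\to 0$ gives $\limsup_{Z\to 0} e_Z(M)\leq e_0(M)$. Note this direction does not even need the confinement integral to be finite — dropping the negative term only helps.

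For the lower bound, I would take a sequence $Z_n\to 0$ realizing $\liminf_{Z\to 0} e_Z(M)$ and, for each $n$, a minimizer $\Om_n$ of $\E_{Z_n}$ (which exists by Theorem~\ref{thm:anyZ}), or failing that an almost-minimizer. Apply the concentration-compactness Lemma~\ref{lem:CC} to the sequence $\Om_n$ — here one must be slightly careful since $Z_n$ is varying rather than fixed, but the same Frank--Lieb splitting arguments go through because the perimeter and nonlocal terms are $Z$-independent and the confinement term is handled by Lemma~\ref{lem:confinement}, which needs no smallness of $Z$. In case (A) of the lemma, $\Om_n\to E^0$ globally with $|E^0|=M$; by lower semicontinuity of perimeter and of $\mathcal D$, together with $Z_n\int_{\Om_n}|x|^{-p}\,dx \to 0$ (since $Z_n\to 0$ and this integral is bounded — bounded because $\E_{Z_n}(\Om_n)\leq e_{Z_n}(M)\leq e_0(M)$ forces the confinement term to be controlled), we get $e_0(M)\leq \E_0(E^0)\leq \liminf_n \E_0(\Om_n) = \liminf_n (\E_{Z_n}(\Om_n) + Z_n\int|x|^{-p}) = \liminf_n e_{Z_n}(M)$. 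In case (B), decomposition~\eqref{eq:anyZ1} together with subadditivity~\eqref{eq:subadd} of $e_0$ gives $\liminf_n \E_{Z_n}(\Om_n) \geq e_0(m^0)+\sum_{i=1}^N e_0(m^i) \geq e_0(M)$, again using that the confinement contribution vanishes in the limit.

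The main obstacle — really the only subtle point — is controlling the confinement term $Z_n\int_{\Om_n}|x|^{-p}\,dx$ uniformly in $n$ so that $Z_n\to 0$ actually kills it. The upper bound $e_{Z_n}(M)\leq e_0(M)$ from the first half of the argument gives $Z_n\int_{\Om_n}|x|^{-p}\,dx \leq \E_0(\Om_n) - e_{Z_n}(M) \geq 0$, but I actually need an upper bound on this quantity; instead one observes $Z_n\int_{\Om_n}|x|^{-p}\,dx = \E_0(\Om_n) - \E_{Z_n}(\Om_n) = \Per(\Om_n) + \mathcal D(\Om_n,\Om_n) - e_{Z_n}(M)$, and the right side is bounded once we know $\Per(\Om_n)$ and $\mathcal D(\Om_n,\Om_n)$ are bounded, which follows from the density/boundedness estimates for minimizers (or from the fact that minimizers of $\E_Z$ have uniformly bounded perimeter, as used throughout) — alternatively, one can bypass this entirely: in the concentration-compactness decomposition the confinement integral over the escaping pieces $F_n^i + x_n^i$ vanishes by Lemma~\ref{lem:confinement} since $|x_n^i|\to\infty$, and over the bounded piece $F_n^0\to E^0$ it converges to $\int_{E^0}|x|^{-p}\,dx$, a fixed finite number, so multiplying by $Z_n\to 0$ sends it to zero regardless. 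With that observation the proof is short and I would write it in roughly a page.
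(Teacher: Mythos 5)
Your upper bound is exactly the paper's. For the lower bound, however, the paper does something much simpler than what you propose, and your route has two soft spots. The paper's observation is that the confinement integral is uniformly bounded over \emph{all} competitors of mass $M$, by the elementary split
\[
\int_{\Om}\frac{dx}{|x|^p}\ \leq\ \int_{B_1(0)}\frac{dx}{|x|^p}+|\Om\setminus B_1(0)|\ \leq\ \frac{\omega_d}{d-p}+M ,
\]
since $p<d$ makes $|x|^{-p}$ integrable near the origin and it is bounded by $1$ outside $B_1$. Applied to a minimizer $\Om_Z$ (Theorem~\ref{thm:anyZ}), this gives at once $e_0(M)\leq \E_0(\Om_Z)=\E_Z(\Om_Z)+Z\int_{\Om_Z}|x|^{-p}\,dx\leq e_Z(M)+CZ$, and the lemma follows in four lines with no compactness argument at all. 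You correctly identify the uniform control of $Z\int_{\Om_n}|x|^{-p}\,dx$ as the only real issue, but you miss this pointwise resolution and instead route the bound through Lemma~\ref{lem:CC}.

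That detour creates the two soft spots. First, Lemma~\ref{lem:CC} is stated for a minimizing sequence of $e_Z(M)$ at \emph{fixed} $Z$; to apply it (with $Z=0$) to $\{\Om_{Z_n}\}$ you would need to know that this is a minimizing sequence for $e_0(M)$, which is essentially the statement being proved --- indeed the paper invokes Lemma~\ref{lem:min} precisely to license that application of Lemma~\ref{lem:CC} in the proof of Theorem~\ref{thm:Z20}. You acknowledge this and say the splitting arguments ``go through,'' which is true, but then you are re-proving the lemma rather than citing it. Second, your first attempt at bounding the confinement term --- via boundedness of $\Per(\Om_n)$ and $\mathcal{D}(\Om_n,\Om_n)$ ``from density/boundedness estimates'' --- is circular as written, since any a priori bound on $\Per(\Om_n)$ from the energy itself requires the confinement term to be controlled first. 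Your final ``bypass'' (escaping pieces killed by Lemma~\ref{lem:confinement}, the bounded piece contributing a finite integral) does close the argument, so the proof is salvageable, but the displayed one-line bound above makes the entire concentration-compactness apparatus unnecessary here.
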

\begin{proof}
Let $\Om_Z$ be a minimizer of $\E_Z$ which exists for any $Z>0$ and $M>0$ by Theorem \ref{thm:anyZ}. Then, clearly $e_Z(M) \leq e_0(M)$ for all $Z>0$, and
\begin{align*}
  \E_0(\Om_Z)&= \E_Z(\Om_Z) + Z\int_{\Om_Z} {dx\over |x|^p}\leq \E_Z(\Om_Z) + Z\int_{B_1(0)} \frac{dx}{|x|^p} \\
  &\qquad\qquad\qquad\qquad\qquad\qquad\qquad\qquad\qquad + Z |\Om_Z \cap (\Rd\setminus B_1(0))| \\
  &\leq \E_Z(\Omega_Z) + \left({\omega_d \over (d-p)} + M\right)Z,
\end{align*}
where $\omega_d=|B_1(0)|$ denotes the volume of the unit ball in $\Rd$. Therefore we also have $e_0(M)\leq \liminf_{Z\to 0} e_Z(M)$, which proves the claim. 
\end{proof}


The following lemma is key in obtaining regularity properties for a family of minimizers of the functionals $\E_Z$.

\begin{lemma}\label{lem:omegamin}
The family of minimizers $\{\Om_Z\}_{Z\in (0,1]}$ of $\E_Z$ are $(\omega,r)$-minimizers of the perimeter functional in $\mathcal{O}:=\Rd\setminus \overline{B_\delta(0)}$ for any $\delta>0$, with $\omega,\, r>0$ uniformly chosen for $Z\in (0,1]$; that is,
	\[
		 \Per(\Omega_Z)\leq \Per(F) + \omega\, | \Omega_Z\triangle F|,
	\]
for all $F\subset\Rd$ with $\Omega_Z\triangle F\subset B_r(x_0)\subset \R^d\setminus B_\delta(0)$.
\end{lemma}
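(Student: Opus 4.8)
The plan is to show that the nonlocal repulsive term and the confinement term are both ``lower-order'' perturbations of the perimeter with respect to the symmetric-difference metric, uniformly over $Z\in(0,1]$ and away from the origin, and then use the minimality of $\Om_Z$ to transfer this to an $(\omega,r)$-minimality statement. Fix $\delta>0$ and work in $\mathcal{O}=\Rd\setminus\overline{B_\delta(0)}$. Let $F\subset\Rd$ be a competitor with $\Om_Z\triangle F\subset B_r(x_0)\subset\mathcal{O}$, where $r>0$ is to be chosen (small, depending only on $\delta$). First I would use minimality of $\Om_Z$ for $\E_Z$ to write
\[
  \Per(\Om_Z)\leq \Per(F) + \bigl[\mathcal{D}(F,F)-\mathcal{D}(\Om_Z,\Om_Z)\bigr] - Z\int_F\frac{dx}{|x|^p} + Z\int_{\Om_Z}\frac{dx}{|x|^p}.
\]
So it suffices to bound the bracketed nonlocal difference and the confinement difference by $C\,|\Om_Z\triangle F|$ with $C$ independent of $Z$ and $F$.

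For the confinement terms, since the modification $\Om_Z\triangle F$ is contained in $B_r(x_0)\subset\mathcal{O}$, on that ball $|x|^{-p}\leq \delta^{-p}$, hence
\[
  \Bigl| Z\int_{\Om_Z}\frac{dx}{|x|^p} - Z\int_F\frac{dx}{|x|^p}\Bigr| \leq Z\int_{\Om_Z\triangle F}\frac{dx}{|x|^p}\leq \delta^{-p}\,|\Om_Z\triangle F|,
\]
using $Z\leq 1$; this contributes a constant $\delta^{-p}$ to $\omega$. For the nonlocal term I would use the standard estimate: writing $v_F(x)=\int_F|x-y|^{-s}\,dy$, one has $\mathcal{D}(F,F)-\mathcal{D}(\Om_Z,\Om_Z)=\int (\chi_F-\chi_{\Om_Z})(v_F+v_{\Om_Z})\,dx$, and the potentials $v_F,v_{\Om_Z}$ are uniformly bounded in $L^\infty$ because $s<d$ and the sets have volume $M$ (so the worst case is a ball of volume $M$; explicitly $\sup_x\int_F|x-y|^{-s}\,dy\leq \int_{B_\rho(0)}|y|^{-s}\,dy$ where $|B_\rho|=M$). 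Hence $|\mathcal{D}(F,F)-\mathcal{D}(\Om_Z,\Om_Z)|\leq C(d,s,M)\,|\Om_Z\triangle F|$ with $C$ independent of $Z$. Collecting terms gives
\[
  \Per(\Om_Z)\leq \Per(F) + \bigl(C(d,s,M)+\delta^{-p}\bigr)\,|\Om_Z\triangle F|,
\]
so we may take $\omega:=C(d,s,M)+\delta^{-p}$ and $r$ any fixed positive number with $B_r(x_0)\subset\mathcal{O}$ — in fact the estimate holds for all $r>0$ subject to that containment, so one can take $r=\delta$ (or any chosen value), uniformly in $Z\in(0,1]$.

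The only genuinely delicate point is the uniform $L^\infty$ bound on the Riesz potentials $v_F$: it must not depend on the shape of $F$, only on $|F|=M$ and on $s<d$. This follows from the rearrangement fact that among sets of fixed volume the integral $\int_F|x-y|^{-s}\,dy$ is maximized (over all $x$ and all $F$) by centering a ball of volume $M$ at $x$, which is finite precisely because $s<d$. Everything else is routine real analysis, and there is no interaction between the cutoff radius $r$ and $Z$, so the constants $\omega,r$ are manifestly uniform over $Z\in(0,1]$. I would also remark that the reason we must excise $\overline{B_\delta(0)}$ is exactly the singularity of $|x|^{-p}$ at the origin; away from it the background potential is a bounded, hence harmless, perturbation.
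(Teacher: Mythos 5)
There is a genuine gap at the very first step. The definition of an $(\omega,r)$-minimizer quantifies over \emph{all} competitors $F$ with $\Om_Z\triangle F\subset B_r(x_0)$; such an $F$ need not satisfy the volume constraint $|F|=M$. Your opening inequality
\[
\Per(\Om_Z)\leq \Per(F)+\bigl[\mathcal{D}(F,F)-\mathcal{D}(\Om_Z,\Om_Z)\bigr]-Z\int_F|x|^{-p}dx+Z\int_{\Om_Z}|x|^{-p}dx
\]
rests on $\E_Z(\Om_Z)\leq\E_Z(F)$, which you only know for admissible competitors, i.e.\ those with $|F|=M$. Restricting to such $F$ would only give volume-constrained quasiminimality, which is not the statement of the lemma and is not what is fed into the regularity theory of Lemma~\ref{thm:tamanini}. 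The paper closes exactly this gap before doing anything else: it replaces the constraint by a penalization, $\calF_Z^\lambda(F):=\E_Z(F)+\lambda\bigl||F|-M\bigr|$, and proves that for a suitable $\lambda$ the unconstrained minimizer of $\calF_Z^\lambda$ coincides with $\Om_Z$. Then $\calF_Z^\lambda(\Om_Z)\leq\calF_Z^\lambda(F)$ holds for arbitrary $F$, and the extra term $\lambda\bigl||F|-M\bigr|\leq\lambda|\Om_Z\triangle F|$ is absorbed into $\omega$.

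Moreover, the uniformity of this $\lambda$ over $Z\in(0,1]$ is itself a nontrivial point that your proposal never encounters: the paper argues by contradiction, rescaling a hypothetical better competitor $E_n$ to volume $M$ and showing that the energy gain from dilation is dominated by $\lambda_n\bigl||E_n|-M\bigr|$ once $\lambda_n\to\infty$. Aside from this missing ingredient, your estimates are correct and essentially the paper's: the confinement difference is bounded by $\delta^{-p}|\Om_Z\triangle F|$ using $Z\leq 1$ and $|x|^{-p}\leq\delta^{-p}$ on $\mathcal{O}$, and the nonlocal difference by $C(d,s,M)|\Om_Z\triangle F|$ via the uniform $L^\infty$ bound on Riesz potentials (the paper cites \cite[Proposition 2.3]{BoCr14}; your rearrangement argument is a fine self-contained substitute). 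So the fix is to insert the uniform penalization step, after which your computation goes through with $\omega=C(d,s,M)+\delta^{-p}+\lambda$.
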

\begin{proof}
First we show that the constraint $|\Omega_Z|=M$ may be replaced by a penalization, following \cite[Theorem 2.7]{BoCr14} (see also \cite[Section 2]{EsFu2011}.)  For $\lambda>0$ (to be determined), define the penalized functionals
	\[
	   \calF_Z^\lambda(F) := \E_Z(F) + \lambda\bigl| |F|-|\Omega_Z|\bigr| =
		    \E_Z(F) + \lambda\bigl| |F|-M\bigr|.
	\]

We claim that there exists $\lambda>0$ so that for all $0<Z\leq 1$, 
\begin{equation}\label{eq:omega1}
\min \calF_Z^\lambda = \calF_Z^\lambda(\Omega_Z)= \E_Z(\Omega_Z),
\end{equation}
i.e., the unconstrained minimizer of $\calF_Z^\lambda$ coincides with the mass-constrained minimizer of $\E_Z$.
Indeed, the existence of a constant $\lambda=\lambda_Z>0$ for each fixed $Z>0$ satisfying the claim follows by a minor modification of \cite[Theorem 2.7]{BoCr14}, so it suffices to show that $\lambda$ may be chosen independently of $Z$.  Suppose no such $\lambda$ exists, so there are sequences $Z_n\to 0$, $\lambda_n\to\infty$, and sets $E_n\subset\Rd$, $|E_n|\neq M$, with 
$\calF_{Z_n}^{\lambda_n}(E_n)<\calF_{Z_n}^{\lambda_n}(\Omega_{Z_n})$.  Note that $\lambda_n\to\infty$ implies that $|E_n|\to M$.  

Define sets $\tilde E_n=t_n E_n$ where $t_n=[M/|E_n|]^{1/d}$, so $|\tilde E_n|=M$.  Each term in $\calF_{Z_n}(\tilde E_n)$ may then be calculated via scaling,
\begin{align*}  \calF_{Z_n}^{\lambda_n}(\tilde E_n) &= \E_{Z_n}(\tilde E_n) = t_n^{d-1}\Per(E_n) 
       +t_n^{2d-s}\mathcal{D}(E_n,E_n) 
       - t_n^{d-p}Z_n\int_{E_n} |x|^{-p}\, dx\\
    &=    \calF_{Z_n}^{\lambda_n}(E_n) + \left(t_n^{d-1}-1\right)\Per(E_n)
                      + \left(t_n^{2d-s}-1\right)\mathcal{D}(E_n,E_n) \\
    &\qquad
                        -\left(t_n^{d-p}-1\right)Z_n\int_{E_n}|x|^{-p}\, dx 
                         - \lambda_n\left|t_n^{d-1}-1\right|\,|E_n| \\
     &\leq \calF_{Z_n}^{\lambda_n}(E_n) + \left|t_n^{d-1}-1\right|\,|E_n|\, 
     \left[ \E_{0}(E_n){ (t_n^{d-1}+ t_n^{2d-s} -2)                        \over
           \left|t_n^{d-1}-1\right|\,|E_n|} -\lambda_n \right] \\
      &< \calF_{Z_n}^{\lambda_n}(E_n),
\end{align*}
as $\lambda_n\to\infty$ since the term in brackets is eventually negative.
 This contradicts the definition of $E_n$ as minimizers of $\calF_{Z_n}^{\lambda_n}$, and so we conclude that \eqref{eq:omega1} must hold.

\medskip

Now fix any $r>0$ and assume $B_r(x_0)\cap B_\delta(0)=\emptyset$, and $F\subset\Rd$ with $\Omega_Z\triangle F\subset B_r(x_0)$.  Denote
	\[
	  	\mathcal{V}(F)\, :=\, \int_F \frac{dx}{|x|^p}.
	\]
Then, $\E_Z(\Omega_Z)=\calF_Z^\lambda(\Omega_Z)\leq \calF_Z^\lambda(F)$ implies that
\begin{align*}
\Per(\Omega_Z)  &\leq \Per(F) + \big(\mathcal{D}(F,F)-\mathcal{D}(\Omega_Z,\Omega_Z)\big)
     + \big(\mathcal{V}(\Omega_Z)-\mathcal{V}(F)\big) + \lambda\bigl| |F| - M\bigr|\\
     &\leq  \Per(F) + (C_0 + \delta^{-p} + \lambda)|\Omega_Z\triangle F|,
\end{align*}
where the difference of the nonlocal terms is estimated in \cite[Proposition 2.3]{BoCr14}, and to estimate the confinement term we use the fact that $|x|^{-p}\in L^{\infty}(\Rd\setminus B_\delta(0))$.  Thus, $\Omega_Z$ are $(\omega,r)$-minimizers of the perimeter functional in $\Rd\setminus B_\delta(0)$ with $\omega=C_0+\delta^{-p}+\lambda$ and any $r>0$. 
\end{proof}

Finally, we state the following regularity results for $(\omega,r)$-minimizers that we will require in the proof of Theorems \ref{thm:Z20} and \ref{thm:BC}.

\begin{lemma}[see Theorems 21.8, 21.14 and 26.6 of \cite{Maggi}]\label{thm:tamanini}  Let $\mathcal{O}\subset\R^d$ be an open set.
\begin{enumerate}
\item  If $E\subset\R^d$ is an $(\omega,r)$-minimizer of perimeter in $\mathcal{O}$ then $\partial^*E\cap\mathcal{O}$ is a $C^{1,\alpha}$ hypersurface for any $\alpha\in(0,1/2)$.
\item  If $E_n\subset\R^d$ is a sequence of uniformly $(\omega,r)$-minimizers of perimeter in $\mathcal{O}$ with $E_n\to E_\infty$ locally in $\mathcal{O}$, then for any sequence $x_n\in \partial E_n$ with $x_n\to x_\infty$ we have $x_\infty\in\partial E_\infty$.  Moreover, if $x_n\in \partial^* E_n$, then $x_\infty\in \partial^* E_\infty$ and the normal vectors satisfy $\nu(x_n)\to \nu(x_\infty)$.
\end{enumerate}
\end{lemma}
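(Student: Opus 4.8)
The plan is to treat this statement as a compilation of the classical regularity theory for $(\omega,r)$-minimizers of perimeter and simply to point to the relevant pieces of \cite{Maggi}; there is nothing new here.

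For part (1), I would first recall that an $(\omega,r)$-minimizer $E$ in $\mathcal{O}$ enjoys uniform upper and lower perimeter-density bounds at every point of $\partial E\cap\mathcal{O}$, so that $\mathcal{H}^{d-1}$ restricted to $\partial^*E$ is a doubling measure whose support is $\partial E\cap\mathcal{O}$. One then runs the De~Giorgi $\varepsilon$-regularity scheme: the height bound, the Lipschitz approximation theorem, the reverse Poincar\'e inequality, and harmonic comparison combine into an excess-improvement inequality for the excess $\mathbf{e}(E,x,\rho)$, of the form $\mathbf{e}(E,x,\theta\rho)\leq\tfrac{1}{2}\,\theta\,\mathbf{e}(E,x,\rho)+C\omega\rho$ for $\rho<r$ and a suitable fixed $\theta\in(0,1)$. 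The crucial observation is that the almost-minimality error $\omega\,|E\triangle F|\leq\omega\rho^{\,d}$ is of lower order than the perimeter scaling $\rho^{\,d-1}$, so it enters only through the harmless additive term $C\omega\rho$ and does not affect the decay rate; since the excess tends to $0$ at every point of $\partial^*E$, the $\varepsilon$-regularity theorem applies there and yields that $\partial^*E\cap\mathcal{O}$ is locally a $C^{1,\alpha}$ graph for every $\alpha<1/2$ (see \cite[Theorem~21.8]{Maggi} and \S26 therein).

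For part (2), the first step is the closure property: if the $E_n$ are uniform $(\omega,r)$-minimizers in $\mathcal{O}$ and $E_n\to E_\infty$ in $L^1_{\loc}(\mathcal{O})$, then $E_\infty$ is again an $(\omega,r)$-minimizer in $\mathcal{O}$ and, crucially, there is no loss of perimeter mass, i.e.\ $\Per(E_n;U)\to\Per(E_\infty;U)$ for every relatively compact open $U\subset\mathcal{O}$ with $\mathcal{H}^{d-1}(\partial U\cap\partial^*E_\infty)=0$; this follows from lower semicontinuity of the perimeter together with the almost-minimality comparison, as in \cite[Theorem~21.14]{Maggi}. The uniform density bounds then give the first assertion by a standard argument: if $x_\infty\notin\partial E_\infty$, then in some ball $B_\rho(x_\infty)$ the set $E_\infty$ agrees, up to a null set, with $\emptyset$ or with the whole ball, so by $L^1_{\loc}$ convergence the same is nearly true of $E_n$ in that ball for $n$ large, contradicting the lower perimeter-density estimate for $\partial E_n$ at $x_n\to x_\infty$; hence $x_\infty\in\partial E_\infty$. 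For the statement on the reduced boundary, no loss of perimeter upgrades the convergence to weak-$*$ convergence of the Gauss--Green measures $\nu_{E_n}\,\mathcal{H}^{d-1}$ (restricted to $\partial^*E_n$) towards $\nu_{E_\infty}\,\mathcal{H}^{d-1}$ (restricted to $\partial^*E_\infty$), so that $\mathbf{e}(E_n,x_n,\rho)\to\mathbf{e}(E_\infty,x_\infty,\rho)$ for a.e.\ $\rho<r$. Feeding this into the $\varepsilon$-regularity theorem — whose smallness threshold and $C^{1,\alpha}$ bounds depend only on $d,\omega,r$ — the graphs parametrizing $\partial E_n$ near $x_n$ subconverge in $C^{1,\alpha'}$, for every $\alpha'<\alpha$, to a $C^{1,\alpha}$ graph for $\partial E_\infty$ near $x_\infty$, which forces $x_\infty\in\partial^*E_\infty$ and $\nu(x_n)\to\nu(x_\infty)$; this is \cite[Theorem~26.6]{Maggi}.

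I expect the only genuine subtlety — and the reason this is worth recording as a lemma — to be the uniformity across the sequence in part (2): one must ensure that the scale at which $\partial E_n$ becomes a $C^{1,\alpha}$ graph near $x_n$ does not degenerate as $n\to\infty$, and this is exactly what the convergence of the excess at a fixed scale, combined with the $n$-independent thresholds of the $\varepsilon$-regularity theorem, is designed to deliver. Everything else is bookkeeping with density estimates and lower semicontinuity.
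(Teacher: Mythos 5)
The paper offers no proof of this lemma at all: it is stated as a direct citation of Theorems 21.8, 21.14 and 26.6 of \cite{Maggi}, so your plan of sketching the standard machinery and deferring to those theorems is exactly the paper's (implicit) approach. Part (1), the closure property, the no-loss-of-perimeter statement, and the deduction of $x_\infty\in\partial E_\infty$ from the uniform volume density estimates are all correctly argued and are precisely the content of the cited results.

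There is, however, a genuine gap in your treatment of the final clause of part (2). Your argument is circular: to apply the $\varepsilon$-regularity theorem to $E_n$ near $x_n$ at a scale independent of $n$, you need $\mathbf{e}(E_n,x_n,\rho)<\varepsilon(d)$ for some fixed $\rho$, and via your excess-convergence step this amounts to requiring $\mathbf{e}(E_\infty,x_\infty,\rho)$ to be small --- which is exactly the assertion $x_\infty\in\partial^*E_\infty$ that you are trying to establish. The hypothesis $x_n\in\partial^*E_n$ only yields $\mathbf{e}(E_n,x_n,\rho)\to 0$ as $\rho\to 0$ for each fixed $n$, with no uniformity in $n$. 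Indeed the implication ``$x_n\in\partial^*E_n\Rightarrow x_\infty\in\partial^*E_\infty$'' is false in general: for $d\geq 8$ take $E_n$ identically equal to the Simons cone (a perimeter minimizer, hence an $(\omega,r)$-minimizer) and $x_n\in\partial^*E_n$ converging to the vertex, which lies in $\partial E_\infty\setminus\partial^*E_\infty$. What Theorem 26.6 of \cite{Maggi} actually provides is the reverse direction: \emph{assuming} $x_\infty\in\partial^*E_\infty$, the points $x_n$ eventually lie in $\partial^*E_n$ and $\nu_{E_n}(x_n)\to\nu_{E_\infty}(x_\infty)$. The clause should therefore either carry the hypothesis $x_\infty\in\partial^*E_\infty$ or be restricted to $d\leq 7$, where $\partial E_\infty\cap\mathcal{O}=\partial^*E_\infty\cap\mathcal{O}$; this is harmless for the paper, whose applications of part (2) only use the Hausdorff-convergence statement and invoke the $C^{1,\alpha}$ graph convergence separately from \cite{AcFuMo13}, but your derivation of that clause as written does not close.
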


Thus, a sequence of uniformly $(\omega,r)$-minimizers of perimeter which converges locally has its reduced boundary convergent in the Hausdorff metric. We remark that a stronger form of this $C^{1,\alpha}$ convergence of $\partial^* E_n\to \partial^* E$ is stated in \cite[Theorem 4.2]{AcFuMo13}:  under the hypothesis that $E_n\to E$ globally in $\mathcal{O}$, in fact the convergence of the boundaries is in $C^{1,\alpha}$ for $\alpha\in(0,1/2)$, and $\partial E_n$ may be realized as a $C^{1,\alpha}$ graph over $\partial E$.

We remark that we only need the full force of the regularity theory for Theorem~\ref{thm:BC}. For the proof of Theorem~\ref{thm:Z20} we only require that minimizers for $\E_Z$ are supported in compact sets and converge pointwise to the disjoint components $E^i$.

\bigskip

Now we are ready to prove our main results.

\begin{proof}[Proof of Theorem~\ref{thm:Z20}]  
Let $\{\Om_n\}_{n\in\N}$ with $\Om_n:=\Om_{Z_n}$ be a sequence of minimizers for $e_{Z_n}$ with $Z_n\to 0$. By Lemma \ref{lem:min}, $\{\Om_n\}$ form in fact a minimizing sequence for $e_0$. Therefore by Lemma \ref{lem:CC} we obtain either {\bf (A)} or assertions (i), (ii), and \eqref{eq:Z20.1}, \eqref{eq:Z20.2} in (iii) of part {\bf (B)} of Theorem \ref{thm:Z20}. The statement \eqref{eq:Z20.0}, on the other hand, follows directly from Lemmas \ref{lem:omegamin} and \ref{thm:tamanini} (or \cite[Theorem 4.2]{AcFuMo13}.)
In order to prove \eqref{eq:Z20.3} we adopt the notations from Lemma~\ref{lem:CC}. Our goal here is to use the regularity of minimizing sets to improve the precision of the lower bound defined in the concentration lemma. We prove this in several steps.

\smallskip

\noindent {\bf Step 1:} \ {\sl A more refined decomposition.} \ We return to Step 1 in the proof of Lemma~\ref{lem:CC}, and use the uniform $(\omega,r)$-minimality to show that 
	\[
		  \Omega_n=F^0_n\cup \left[\bigcup_{i=1}^k (F^i_n + x_n^i)\right] ,
	\]
splits cleanly, with no $o(1)$ error in the perimeter, and with remainder set $\Om_n^N=\emptyset$.  In particular, we claim that
	\beqn \label{eq:split3per}
		\Per(\Omega_n) = \sum_{i=0}^N \Per (F^i_n) 
	\eeqn 
holds for each sufficiently large $n$.  For convenience, we define
	\[
	  \tilde F_n^i=F_n^i+x_n^i \qquad \text{and}\qquad  \hat\Omega_n^i=\Omega_n^{i-1} -x_n^i, \qquad i=0,1,\ldots,N.
	\]

To verify \eqref{eq:split3per}, we first note that $E^i$ being minimizers of $e_0(m^i)$, they are essentially bounded domains with smooth $\partial^* E^i$ (cf. \cite[Proposition 2.1 and Lemma 4.1]{KnMu2014}).  Therefore, we may fix $R>0$ so that a representative of each $E^i\subset B_{R/2}(0)$ for each $i=0,1,\dots,N$.  We observe that, since each $E^i$ is bounded, when defining $F^i_n=\hat\Omega^{i-1}_n \cap B_{r_n}(0)$ we may choose the radii $r_n$ found in \cite[Lemma 2.2]{FrLi2015} so that $r_n\in (R,2R)$.  As $\hat\Omega^i_n \to E^i$ locally, it converges globally in $\mathcal{O}:=B_{2R}(0)$.  For $i=1,\dots,N$, we invoke Lemma~\ref{lem:omegamin} which ensures that $\hat\Omega^i_n$ is a family of uniformly $(\omega,r)$-minimizers in $\mathcal{O}$.  By part (ii) the regularity result Lemma~\ref{thm:tamanini}, $\hat\Omega_n^i\cap\mathcal{O}\to E^i\subset B_{R/2}(0)$ in Hausdorff norm, so in particular
$\hat\Omega^i_n\cap B_{2R}(0)\subset B_R(0)$ for all sufficiently large $n$.  When $i=0$ there is the slightly delicate issue that $\Omega_n$ are not necessarily $(\omega,r)$-minimizers in a neighborhood of $0$.  For $i=0$, define the open set $\tilde{\mathcal{O}}:=B_{2R}(0)\setminus \overline{B_\delta(0)}$, with any $\delta\in (0,R/2)$, so $\Omega_n$ are uniformly $(\omega,r)$-minimizers in $\tilde{\mathcal{O}}$.  Again, by part (ii) of Lemma~\ref{thm:tamanini} we conclude that $\Omega_n\cap [B_{2R}(0)\setminus B_R(0)]=\emptyset$ for all sufficiently large $n$.  

Finally, suppose $\Om_n^N\neq\emptyset$ for all $n\in\N$.  Recall that by Lemma~\ref{lem:CC}, $|\Om_n^N|\to 0$, so $\Om_n^N\to\emptyset$ globally.  As $\Om_n$ is an $(\omega, r)$-minimizing sequence each $\partial^*\Om_n^N$ is a smooth hypersurface, and there would then exist $y_n\in\pt\Om^N_n$ for each $n$.  The translates $\hat\Om^N_n:=\Om^N_n-y_n$ are again smooth, with 
$0\in\partial\hat\Om^N_n$ for each $n$.  Invoking (ii) of Lemma~\ref{thm:tamanini} we arrive at a contradiction, because then $0$ lies on the boundary of the limit set of the $\hat\Om_n^N$, which is empty.  Therefore we must have $\Om_n^N=\emptyset$ for large $n$.

As $|x^i_n-x^j_n|\to\infty$ for $i\neq j$, and each $G^i_n \cap B_R(0)=\emptyset$, the components are well separated, and we obtain \eqref{eq:split3per}.  

We remark that \eqref{eq:split3per} also implies the equality of masses before and after passing to the limit, that is:
\beqn\label{eq:eq}
M=\sum_{i=1}^N m^i = \sum_{i=1}^N m^i_n
\eeqn
holds (with no error) for all $n$ sufficiently large.

\medskip

\noindent {\bf Step 2:} \ $E^0\neq\emptyset$. \  Suppose the contrary.  Since there are only finitely many components, we may choose $k\in \{1,2,\ldots,N\}$ and a subsequence (not relabelled) along which we have
$|x^k_n|=\min\{|x^j_n| \colon j=1,\ldots,N\}$.  Consider the sets $\breve\Omega_n:=\Omega_n-x^k_n$.  The perimeter and nonlocal terms in $\E_{Z}$ are translation invariant, hence, this modification only affects the confinement term $\mathcal{V}$.  By Step~3, we have a disjoint decomposition,
	\[
		  \breve\Omega_n= F^0_n \cup F^k_n \cup \left[\bigcup_{i=1\atop i\neq k}^N (F^i_n+y^i_n)\right] \cup \Omega^N_n,
	\]
where $y^i_n=x^i_n-x^k_n$, with $|y^i_n|\to\infty$, $i\neq k$.  Therefore, 
$\mathcal{V}(F^j_n+x^j_n)\to 0$ and $\mathcal{V}(F^i_n+y^i_n)\to 0$, for all $j=1,\dots,N$ and for all $i\neq k$, while $\mathcal{V}(F^k_n)\to \mathcal{V}(E^k)>0$. Hence,
	\beqns
		\begin{aligned}
		  \E_{Z_n}(\breve \Omega_n) -\E_{Z_n}(\Omega_n) &= -Z_n \calV(F^k_n) - Z_n \sum_{i=1 \atop i\neq k}^N \calV(F^i_n+y^i_n) + Z_n \sum_{i=1}^N \calV(F^i_n+x^i_n) \\
		  																					  & = -Z_n \calV(E^k) + o(Z_n)<0,
		 \end{aligned}
	\eeqns
which contradicts the minimality of $\Omega_n$.  Hence we must have $|E^0|\neq 0$.

\medskip

\noindent {\bf Step 3:} \ {\sl A more refined lower bound.} \  As in Step 1, there exists $R>0$ for which $F_n^i\subset B_R(0)$ for each $n\in\N$ and $i=0,1,\ldots,N$.  Since $\bigcup_{i=0}^N (F_n^i+x_n^i)\subset \Om_n$, we may decompose the nonlocal term and obtain
	\[
		\mathcal{D}(\Omega_n,\Omega_n)\geq \sum_{i,j=0}^N \mathcal{D}(\tilde F_n^i,\tilde F_n^j).
	\]
Let 
	\[
		 R_{n,ij}:=|x_n^i-x_n^j| \qquad \text{ and } \qquad R_{n,i0}:=|x_n^i|.
	\]
  Then, for all $x\in \tilde F_n^i$, $y\in \tilde F_n^j$ and sufficiently large $n$, we have
$$   |x-y| \geq R_{n,ij} - 2R \geq \frac12 R_{n,ij}.  $$    
By the mean value theorem for $f(t)=t^s$ we then calculate,
\begin{align*}
	\bigl| |x_n^i-x_n^j|^s - |x-y|^s \bigr| 
	&\leq s\left(\frac12 R_{n,ij}\right)^{s-1} \left| x_n^i-x_n^j -x+y\right|
	\\
		&\leq  C R_{n,ij}^{s-1}\big(|x_n^i-x|+|x_n^j-y|\big) \leq 2CR\, R_{n,ij}^{s-1}
\end{align*}
Hence, for all sufficiently large $n$,
	\[
	  \left| {1\over |x-y|^s} - {1\over |x^i_n- x^j_n|^s}\right| = {\bigl| |x_n^i-x_n^j|^s - |x-y|^s \bigr|\over |x-y|^s\,|x^i_n-x^j_n|^s}\leq 
     {C\over R_{n,ij}^{s+1}}
    \]
for all $0<s<d$, and we may estimate the off-diagonal terms in the nonlocal energy via
	\begin{multline} \label{eq:offdiag}
			\left|  \mathcal{D}(\tilde F_n^i,\tilde F_n^j) - {m^i_n m^j_n\over |x^i_n-x^j_n|^s}\right|
						 \\ \leq \int_{\tilde F^i_n}\! \int_{\tilde F^j_n} \left| {1\over |x-y|^s} - {1\over |x^i_n-x^j_n|^s}\right| \,dx dy 
						\leq C R_{n,ij}^{-s-1},
	\end{multline}
with a constant $C$ independent of $n$.

The confinement term may be evaluated in a similar way: we have
	\[
	   \left| |x_n^i|^{-p}-|x|^{-p}\right| \leq \sup_{\xi\in \tilde F^i_n} p|\xi|^{-p-1} \, |x-x^i_n| 
\leq C|x^i_n|^{-p-1}\leq CR_{n,i0}^{-p-1},
	\]
and thus 
	\beqn\label{eq:conf}
		  \left| \int_{\tilde F^i_n} \frac{dx}{|x|^p} - {m^i_n\over |x_n^i|^p}\right| \leq CR_{n,i0}^{-p-1}.
	\eeqn
Putting the above estimates together with the perimeter splitting \eqref{eq:split3per}, we obtain a lower bound,
	\begin{align} 
		\E_{Z_n}(\Omega_n)  &\geq \sum_{i=0}^N \E_0 (F^i_n) -Z_n\mathcal{V}(F^0_n)
   +\sum_{i,j=0\atop i\neq j}^N  {m^i_n\, m^j_n\over |x^i_n-x^j_n|^s} \left(1-O(R_{n,ij}^{-1})\right) \nonumber\\
   											&\qquad\qquad\qquad\qquad\qquad\qquad\qquad\qquad\qquad - Z_n\sum_{i=1}^N {m^i_n\over |x^i_n|^p}\left(1+O(R_{n,i0}^{-1})\right) \nonumber \\
   											& \geq \sum_{i=0}^N e_0(m^i_n) -Z_n\mathcal{V}(F^0_n) + 
   \sum_{i,j=0\atop i\neq j}^N  {m^i_n\, m^j_n\over |x^i_n-x^j_n|^s} \left(1-O(R_{n,ij}^{-1})\right) \nonumber\\
   											&\qquad\qquad\qquad\qquad\qquad\qquad\qquad\qquad\qquad - Z_n\sum_{i=1}^N {m^i_n\over |x^i_n|^p}\left(1+O(R_{n,i0}^{-1})\right) \nonumber \\
   \label{eq:lowb}
   											& \geq  \sum_{i=0}^N e_0(m^i_n) -Z_n\mathcal{V}(F^0_n)+ 
   \sum_{i,j=0\atop i\neq j}^N  {m^i\, m^j\over |x^i_n-x^j_n|^s} \left(1-o(1)\right)  \\
   											&\qquad\qquad\qquad\qquad\qquad\qquad\qquad\qquad\qquad - Z_n\sum_{i=1}^N {m^i\over |x^i_n|^p}\left(1+o(1)\right)\nonumber,
\end{align}
where in the last line we have used the convergence $m^i_n\to m^i$.

\medskip

\noindent {\bf Step 4:} \ {\sl A more refined upper bound.} \  In order to obtain a more refined upper bound, let $\Om_t=F_n^0 \cup \left[\bigcup_{i=1}^N (F_n^i+ t\,a^i)\right]$, with sets $F_n^i$ as in Lemma~\ref{lem:CC}, with points $\{a^i\}_{i=1,\dots,N} \subset \Rd$ with $0<|a^i|\leq 1$, and $t>0$ is to be determined optimally.  Substituting $\Om_t$ into $\E_Z$ we recover an upper bound of the same general form as \eqref{upbd} as before, 
\begin{align*}
e_{Z_n}(M)&\leq \E_{Z_n}(\Om_t)  \\
  &\leq \sum_{i=0}^N e_0(m^i_n) - Z_n\mathcal{V}(F^0_n) + \sum_{i,j=0\atop i\neq j}^N 
    \int_{F^i_n+ta^i}\int_{F^j_n+ta^j} {dx\, dy\over |x-y|^s} \\
  &\qquad - Z_n\sum_{i=1}^N \int_{F^i_n+ta^i} |x|^{-p}\, dx
\end{align*}

By the same estimates \eqref{eq:offdiag} and \eqref{eq:conf} as in Step 3 above, we thus have
$$  
\left|  \mathcal{D}(F_n^i,F_n^j) - {m^i m^j\over t^s |a^i-a^j|^s}\right|
						\leq C t^{-s-1}, \qquad
\left| \int_{\tilde F_n^i} \frac{dx}{|x|^p} - {m^i\over t^p|a^i|^p}\right| \leq Ct^{-p-1},
$$
for constant $C$ independent of $t$.  Choosing $t=t_n:=Z_n^{-1/(s-p)}$, we then obtain the upper bound of the form:
\begin{align*}
\ e_{Z_n}(M)\leq \E_{Z_n}(\Om_{t_n}) 
&\leq \sum_{i=0}^N e_0(m^i_n) - Z_n\mathcal{V}(F^0_n)  \\
&\qquad + Z_n^{s/(s-p)}\FNm(0,a^1,\dots,a^N) + O(Z_n^{{s+1\over s-p}}).
\end{align*}
By Proposition~\ref{prop:finitedim}, we may choose $(a^1,\dots,a^N)$ to minimize $\FNm$, and thus obtain the best upper bound,
	\beqn \label{eq:upper}
			\E_{Z_n}(\Om_{t_n}) 
			\leq \sum_{i=0}^N e_0(m^i_n)
			- Z_n\mathcal{V}(F^0_n) + Z_n^{s/(s-p)}\mu_{N,\underline{m}}
			  + o (Z_n^{s/(s-p)}).
	\eeqn

\medskip

\noindent {\bf Step 5:} \ {\sl The scale of $x_n^i=O(Z^{-1/(s-p)})$.} \ 
Lastly, we  prove  \eqref{eq:Z20.3}. To this end,  let 
 $ \xi^i_n=x^i_n Z_n^{1/(s-p)}$ for $i=1,\dots,N.$
%
Using  the upper bound \eqref{eq:upper} followed by the 
lower bound \eqref{eq:lowb} we find 
\begin{align*}
  Z_n^{s/(s-p)}\mu_{N,\underline{m}} + o(Z_n^{s/(s-p)})&\geq \E_{Z_n}(\Om_n)
  -\sum_{i=0}^N e_0(m^i_n)+Z_n\mathcal{V}(F^0_n)  \\
     &\geq Z_n^{s/(s-p)} \FNm(0,\xi_n^1,\dots,\xi_n^N)\big( 1+o(1)\big).
\end{align*}
After dividing by $Z_n^{s/(s-p)}$, we conclude that $\{\xi_n^i\}_{i=0,\dots,N}$ is a minimizing sequence for $\FNm$; by Proposition~\ref{prop:finitedim}, the $\xi_n^i$ are in fact bounded, and up to the extraction of a subsequence for each $i=1,\ldots,N$, $\xi_n^i\to y^i$, minimizers of $\FNm$, as $n\to\infty$. We thus obtain \eqref{eq:Z20.3}, and the proof of Theorem~\ref{thm:Z20} is complete. 
\end{proof}

\begin{remark} \label{expansion}
We note that the proof of \eqref{eq:Z20.3} in Step 5 above also shows that we have an expansion of the minimizing energy accurate up to the third-order term, namely,
	\beqn
		\E_{Z_n}(\Om_n) = \sum_{i=0}^N e_0(m^i)  - Z_n\mathcal{V}(F_n^0) + Z_n^{{s\over s-p}} \FNm(0,y_1,\dots,y_N) + o\left(Z_n^{{s\over s-p}}\right), \nonumber
	\eeqn
where $F^0_n$ are the sets constructed in Lemma~\ref{lem:CC}.  One might be tempted to pass to the limit $F_n^i\to E^i$ and express the expansion in terms of the components of the generalized minimizer, but it is not at all clear what the error term in such an expansion would be.
\end{remark}

\bigskip

Finally, we prove Theorem~\ref{thm:BC}.
\begin{proof}[Proof of Theorem~\ref{thm:BC}]  By Step 1 in the proof of Theorem~\ref{thm:Z20} and \ref{thm:tamanini} (see also Theorem~27.5 of \cite{Maggi}) the reduced boundary $\partial^*\Om_n$ is a disjoint union of smooth hypersurfaces.  In fact, by \cite[Theorem 2.7]{BoCr14}, $\partial^*\Om_n$ is of class $C^{3,\beta}$ for $\beta<d-1-s$. In particular, the Euler-Lagrange equation,
\begin{equation}\label{eq:EL1}   (d-1)\kappa(x) + 2v_{\Om_n}(x) - Z_n |x|^{-p} = \lambda_n,  
\end{equation}
is satisfied pointwise on $\partial^*\Om_n$, where $\kappa$ is the mean curvature in $\Rd$, $\lambda_n$ is a Lagrange multiplier, and $v_{\Om_n}(x)$ is the Riesz potential, 
$$  v_{\Om}(x):= \int_\Om  {dy\over |x-y|^s}.  $$
In addition, by the proof of Theorem~\ref{thm:Z20}, $\Om_n$ is $C^{1,\alpha}$ close to the sets
$$   S_n:= \left[ E^0 + \bigcup_{i=1}^N (E^i + x^i_n)\right],  $$
in the sense that for all fixed $R>0$ with $E^i \subset\!\subset B_R(0)$, 
$$
\pt^*\widetilde\Om_n^i:=(\partial^*\Om_n- x^i_n)\cap B_R(0) \to \partial^* E^i \qquad\text{ in $C^{1,\alpha}$},
$$
for all $\alpha\in (0,\frac12)$, and the former are expressed as graphs over the limiting sets $E^i$,
$$   \pt^*  \widetilde\Om_n^i = \{y=\Psi_n(x):= x+\psi_n(x)\nu_i(x): \ x\in \pt^* E^i\},  $$
with $\psi_n(x)\to 0$ in $C^{1,\alpha}$ (see \cite[Theorem 4.2]{AcFuMo13}.)  As each $E^i$ is itself a minimizer of $\E_0$, by the above stated regularity theorem, $\pt^* E^i$ is of class $C^{3,\beta}$, and its normal vector $\nu_{E^i}\in C^2$.  Finally, by \cite[Proposition~2.1]{BoCr14}, the Riesz potentials $v_{\widetilde\Om_n^i}$ are bounded in $C^{1,\beta}(B_R(0))$, so along a subsequence they converge uniformly to $v_{E^i}$ in $B_R(0)$.

For any $\zeta\in C_0^\infty(B_R(0);\R^d)$ we may integrate the Euler-Lagrange equations \eqref{eq:EL1} by parts over $\pt^*\widetilde\Om_n^i$,
\beqn \label{eq:EL}
  \int_{\pt^* \widetilde\Om_n^i} \left( \text{div}_{\tau_n} \zeta - (2v_{\widetilde\Om_n^i}- Z_n|x|^{-p})( \zeta\cdot\nu_n) \right)\, d\Hd  = \lambda_n\int_{\pt^* \widetilde\Om_n^i}  \zeta\cdot\nu_n \, d\Hd,
\eeqn
where $\nu_n:=\nu_{\widetilde\Om_n^i}$ is the normal vector, and the tangential divergence on $\pt^* \widetilde\Om_n^i$ is defined via
$$  \dive_{\tau_n} \zeta = \dive\, \zeta - \nu_n\cdot D\zeta\,\nu_n.  $$
Using the parametrization $\Psi_n$ we can write the above equation with integrals over $\pt^* E^i$, with Jacobian $J_n=|\det D\Psi_n|$.  As $\nu_n\to\nu_{E^i}$, we have $\dive_{\tau_n} \zeta \to \dive_{\tau_{E^i}}\zeta$, and $J_n\to 1$, by the $C^{1,\alpha}$ convergence and $\nu_{E^i}\in C^2$.  Thus, we may pass to the limit $n\to\infty$ in the both integrals in \eqref{eq:EL} and obtain
\begin{multline} \nonumber
 \int_{\pt^* \widetilde\Om_n^i} \left( \dive_{\tau_n} \zeta - (2v_{\widetilde\Om_n^i}- Z_n|x|^{-p})(\zeta\cdot\nu_n) \right)\, d\Hd  \\
 \longrightarrow 
 \int_{\pt^* E^i} \left( \dive_{\tau_{E^i}} \zeta - 2v_{E^i} (\zeta \cdot \nu_{E^i}) \right)\, d\Hd,
\end{multline}
and
\[
\int_{\pt^* \widetilde\Om_n^i}  \zeta\cdot\nu_n \, d\Hd
\longrightarrow
\int_{\pt^* E^i}  \zeta\cdot\nu_{E^i} \, d\Hd.
\]
Thus, $\lambda_n\to \lambda_0$ for some limiting Lagrange multiplier $\lambda_0$.  The values of $\lambda_n$ being (by \eqref{eq:EL1}) the same for each component of $\partial^*\Om_n$, the value of $\lambda_0$ is independent of $i=0,\dots,N$.  Thus, the limiting curvature equation is the same for each limiting set $E^i$, notably with the same Lagrange multiplier $\lambda_0$.  Since for $s<\bar s(d)$ the limiting sets $E^i$ are all balls (cf. \cite[Theorem 2.11]{BoCr14}), and the Lagrange multiplier is uniquely determined by the mass $m^i$ for balls, they must all have the same radius. 
\end{proof}


\bibliographystyle{IEEEtranS}
\def\url#1{}
\bibliography{Z_to_zero}

\end{document}